\declaretheorem[style = plain, numberwithin = section]{theorem}
\declaretheorem[style = plain,      sibling = theorem]{corollary}
\declaretheorem[style = plain,      sibling = theorem]{lemma}
\declaretheorem[style = plain,      sibling = theorem]{proposition}
\declaretheorem[style = definition, sibling = theorem]{definition}
\declaretheorem[style = definition, sibling = theorem]{remark}
\newtheorem{introthm}{Theorem}
\newcommand{\N}{\mathbb{N}}
\newcommand{\R}{\mathbb{R}}
\newcommand{\C}{\mathbb{C}}
\newcommand{\vN}{\mathrm{W}^*}
\DeclareFontFamily{U}{mathx}{}
\DeclareFontShape{U}{mathx}{m}{n}{<-> mathx10}{}
\DeclareSymbolFont{mathx}{U}{mathx}{m}{n}
\DeclareMathAccent{\widehat}{0}{mathx}{"70}
\DeclareMathAccent{\widecheck}{0}{mathx}{"71}
\def \brack#1#2{ {_\alpha\!}\langle #1,#2\rangle }
\def \dom{ \operatorname{dom} }
\def \M{ \mathcal{M} }
\def \meas#1{ L^0(#1) }
\def \supp{ \operatorname{supp} }
\title[A Duflo--Moore theorem for group actions on von Neumann algebras]{A Duflo--Moore theorem for ergodic group actions on semifinite von Neumann algebras}
\author{Ulrik Enstad}
\address{Department of Mathematics,
University of Oslo,
Moltke Moes vei 35,
0851 Oslo.}
\email{ubenstad@math.uio.no}
\author{Hannes Wendt}
\address{Department of Mathematics,
University of Oslo,
Moltke Moes vei 35,
0851 Oslo.}
\email{hhwendt@math.uio.no}
\begin{document}

\begin{abstract}
    We prove a generalization of the orthogonality relations of Duflo and Moore for ergodic, trace-preserving group actions on von Neumann algebras that are integrable in a suitable sense. We also obtain convolution inequalities that generalize both Young's inequality for convolution on locally compact groups and inequalities for operator-operator convolutions in Werner's quantum harmonic analysis.
\end{abstract}

\maketitle

\section{Introduction}

The orthogonality relations of Duflo and Moore \cite{DuMo1976} are fundamental to the representation theory of non-unimodular locally compact groups. They state that for a square-integrable unitary representation $\pi$ of a locally compact group $G$ on a Hilbert space $\mathcal{H}$, there exists a unique positive, self-adjoint, invertible operator $D$ on $\mathcal{H}$ such that
\begin{equation}
    \int_G \langle \xi, \pi_s \eta \rangle \overline{ \langle \xi', \pi_s \eta' \rangle } \dif{s} = \langle \xi, \xi' \rangle \overline{ \langle D^{-1/2} \eta , D^{-1/2} \eta' \rangle } , \quad \xi, \xi' \in \mathcal{H}, \; \eta , \eta' \in \dom(D^{-1/2}). \label{eq:duflo-moore}
\end{equation}
The operator $D$, known as the \emph{Duflo--Moore operator}\footnote{We follow the convention of \cite{DuMo1976} which has the advantage that if $G$ is compact and equipped with its normalized Haar measure, then $D = d_\pi I$ where $d_\pi$ is the dimension of $\pi$. We remark that two other common conventions for the definition of $D$ correspond to the inverse or the square root of our choice of $D$, respectively.} associated with $\pi$, may be unbounded; it is bounded exactly when $G$ is unimodular, in which case it is a constant multiple of the identity. The relations \eqref{eq:duflo-moore} also extend to projective, square-integrable representations, see e.g.\ \cite{An06}.

Duflo--Moore operators arise naturally in several branches of applied harmonic analysis. In wavelet theory, which centers around representations of the affine group and other semidirect products \cite{Fu05}, elements in $\dom(D^{-1/2})$ are referred to as admissible vectors. For projective square-integrable representations of $\mathbb{R}^{2d}$ (alternatively the Schrödinger representation of the Heisenberg group), which are essential to time-frequency analysis, the orthogonality relations \eqref{eq:duflo-moore} are known as Moyal's identity \cite[Theorem 3.2.1, Proposition 4.3.2]{Gr01}. In this case, the underlying group is unimodular, so all vectors are admissible.

Moyal's identity is the starting point of Werner's quantum harmonic analysis on phase space (QHA) \cite{We84}, which has seen connections to time-frequency analysis in the last few years (see e.g.\ \cite{LuSk18,LuSk19,LuSk20,LuSk21,BeBeLu22}). An affine version of QHA was recently constructed in \cite{BeBeLu22} with analogous connections to wavelet theory. In \cite{Ha23,FuGa25} a foundation for QHA in the setting of a general square-integrable representation $\pi$ was developed, which rests on the orthogonality relations of Duflo and Moore \eqref{eq:duflo-moore}. One of the core concepts in all these variants of QHA is the operator-operator convolution, which associates to two operators $A$ and $B$ on $\mathcal{H}$ the scalar-valued function on $G$ given by $s \mapsto \mathrm{tr}(A \alpha_s(B))$, where $\alpha_s(B) = \pi_s B \pi_s^*$ is termed the \emph{shift} of the operator $B$ by $s \in G$. (Strictly speaking, the operator-operator convolution in Werner's work \cite{We84} is defined using a parity operator, see \Cref{rmk:parity}.) Extending the notion of an admissible vector, an operator $B$ is called admissible if, roughly speaking, $D^{-1/2} B D^{-1/2}$ defines a trace class operator, where $D$ is the associated Duflo--Moore operator. The generalization of \eqref{eq:duflo-moore} in QHA then states that
\begin{equation}
    \int_G \mathrm{tr}(A \alpha_s(B)) \dif{s} = \mathrm{tr}(A) \mathrm{tr}(D^{-1/2} B D^{-1/2})  \label{eq:duflo-operators}
\end{equation}
whenever $A$ is trace class and $B$ is admissible. There is an interplay between the operator-operator convolution and a compatible function-operator convolution that shares many of the properties of classical convolution on groups. For instance, they satisfy versions of Young's inequality (see \cite[Proposition 4.2]{LuSk18}, \cite[Proposition 4.18]{BeBeLu22}, and \cite[Proposition 4.13]{Ha23}).

The goal of this article is to show that both classical convolution of scalar-valued functions on a locally compact group as well as the operator-operator convolution in QHA can be treated simultaneously in a general framework, namely that of an ergodic, trace-preserving action of a locally compact group on a semifinite von Neumann algebra that is integrable in a suitable sense. Our first main theorem (\Cref{thm:intro}) is a direct generalization of \eqref{eq:duflo-operators} to this setting, while our second main theorem (\Cref{thm:intro2}) is a convolution inequality that generalizes both the classical Young's convolution inequality on a locally compact group as well as convolution inequalities in quantum harmonic analysis.

To state our main results, let $\M$ be a von Neumann algebra equipped with an ergodic action $\alpha$ of a locally compact group $G$. We assume a normal, semifinite, faithful trace $\tau$ on $\M$ which is invariant with respect to the action. The natural Banach spaces of operators in this setting are the noncommutative $L^p$-spaces $L^p(\M)$ associated with $\tau$, consisting of (possibly unbounded) operators affiliated with $\M$ defined in terms of the norms $\| x \|_p = \tau(|x|^p)^{1/p}$ for $x \in \M$. When defining an analogue of the operator-operator convolution, we take a slightly different viewpoint than \cite{Ha23,BeBeLu22}. The definitions in these papers do not involve a parity operator and hence do not offer true generalizations of the operator-operator convolution in \cite{We84} (see \Cref{rmk:parity}). Instead we introduce a bracket product $\brack{\cdot}{\cdot}$ taking suitable elements $x$ and $y$ and returning a complex-valued function $\brack{x}{y}$ on $G$. Our notation is inspired by Hilbert C*-modules and indeed $\brack{\cdot}{\cdot}$ is linear with respect to a function-operator convolution in the first argument and conjugate-linear in the second argument, although we do not emphasize this viewpoint in the present paper. When $x \in L^1(\M)$ and $y \in \M$, we define
\[
    \brack{x}{y}(s) = \tau( x \alpha_s(y^*) ), \quad s \in G.
\]
The appropriate generalization of square-integrability in this setting is that there exist nonzero elements $x, y \in \M_+$ with $\tau(x)< \infty$ such that $\brack{x}{y} \in L^1(G)$. In this case, we say that the action of $G$ on $\M$ is \emph{$\tau$-integrable}, see \Cref{def:admissible-action}.

Our first main theorem shows that this bracket is an integrable function on $G$ when $y$ is admissible in a suitable sense generalizing the notion of admissibility from \cite{BeBeLu22,Ha23}, see \Cref{subsec:admissibility}. This admissibility is governed by a Duflo--Moore operator that arises naturally in this setting.

\begin{introthm}\label{thm:intro}
    Let $\M$ be a von Neumann algebra equipped with a normal, semifinite, faithful, tracial weight $\tau$. Suppose that $\M$ is equipped with an ergodic, $\tau$-preserving and $\tau$-integrable action of a locally compact group $G$. Then there exists a unique positive invertible operator $D$ affiliated with $\M$ with the following property: 
    
    For any $x \in L^1(\M)$ and any admissible $y \in \M$, or more generally $y \in L^1(\M,D^{-1})$\footnote{See \Cref{subsec:admissibility} for the definition of the Banach space $L^1(\M,D^{-1})$.}, the function $\brack{x}{y}$ is integrable, with 
    \[ 
        \| \brack{x}{y} \|_1 \leq \| x \|_1 \| D^{-1/2} y D^{-1/2} \|_1, 
    \] 
    and 
    \[ 
        \int_G \brack{x}{y}(s) \dif{s} = \tau(x) \overline{\tau(D^{-1/2} y D^{-1/2})}.
    \]
    
    Furthermore, $D$ satisfies the following semi-invariance relation: 
    \[ 
        \alpha_s(D) = \Delta(s)^{-1} D, \quad s \in G, 
    \] where $\Delta$ denotes the modular function of $G$. In particular, $D$ is bounded if and only if $G$ is unimodular, in which case $D$ is a constant multiple of the identity.
\end{introthm}

The above theorem unifies the Duflo--Moore Theorem of Werner's quantum harmonic analysis (see \Cref{subsec:square-integrable}) and $L^1$-integrability for ordinary convolution on a locally compact group (see \Cref{subsec:classical-convolution}). It is a combination of \Cref{thm:duflo-moore} and \Cref{cor:first_main_theorem}.

Our second main theorem (see \Cref{cor:second_main_theorem}) is the following inequality, which contains Young's convolution inequality for functions on a locally compact group and the convolution inequalities in quantum harmonic analysis as special cases.

\begin{introthm}\label{thm:intro2}
    In the setting of \Cref{thm:intro}, let $p,q,r \geq 1$ satisfy $1/p + 1/q = 1 + 1/r$. Let $x \in L^p(\M)$, $y \in L^q(\M)$, and suppose that $y$ commutes with $D$. Then $\brack{x}{ D^{1/r}y } \in L^r(G)$, with 
    \[
        \| \brack{x}{ D^{1/r}y } \|_r \leq \| x \|_p \| y \|_q. 
    \]
\end{introthm}

Our framework encompasses other examples as well, as shown in \Cref{sec:examples}. For instance, the Fourier inversion formula on a locally compact abelian group can be seen as a consequence of \Cref{thm:intro}, see \Cref{subsec:fourier-inversion}.

\subsection*{Related work}

Since the first version of this preprint was uploaded to the arXiv, the authors were made aware of related work of Arhancet \cite[Section 7.2, 7.3]{Ar25}. Here, the setting is a locally compact quantum group $\mathbb{G}$ acting ergodically and trace-preservingly on a von Neumann algebra $\M$. A version of Werner's operator-operator convolution is defined for pairs of elements $x \in L^\infty(\M)$ and $y \in L^1(\M)$, and \cite[Proposition 7.5]{Ar25} corresponds to \Cref{thm:intro2} for $r = \infty$, $p = 1$ and $q = \infty$ (note that when $r = \infty$, the Duflo--Moore operator makes no appearance in Young's convolution inequality). Thus, there is some overlap between \cite{Ar25} and the main results of the present paper.

\subsection*{Acknowledgements}

The authors are indebted to Franz Luef, Eduard Ortega, and Makoto Yamashita for valuable discussions, and to the anonymous referee for many useful comments and suggestions.

\subsection*{Assumptions}

Throughout the text, $\M$ denotes a von Neumann algebra. We will assume $\M$ to be equipped with a normal, semifinite, faithful, tracial weight which we denote by $\tau$, and we represent $\M$ as bounded linear operators on the corresponding GNS Hilbert space $L^2(\M)$ (see \Cref{subsec:weights} for definitions).

We also fix a locally compact group $G$ with left Haar measure $m$ and modular function $\Delta$. Integration of a measurable function $f$ on $G$ with respect to $m$ is denoted by $\int_G f(s) \dif{s}$. We assume that $G$ acts ergodically and $\tau$-preservingly on $\M$, see \Cref{sec:group-actions}. We will assume that the action is \emph{$\tau$-integrable}, which means that there exist nonzero positive elements $x,y \in \M_+$ such that 
\[
    \int_G \tau(x^{1/2} \alpha_{s}(y) x^{1/2}) \dif{s} < \infty. 
\]
See \Cref{def:admissible-action}.

\section{Admissibility with respect to an affiliated operator}

The goal of this section is to cover preliminaries on weights on von Neumann algebras and to introduce the notion of $A$-admissibility, where $A$ is a positive operator affiliated with $\M$. This notion will later be applied to the case where $A$ is the inverse of the Duflo--Moore operator associated with an ergodic, $\tau$-preserving, $\tau$-integrable group action on $\M$.

\subsection{Weights}\label{subsec:weights}

For a detailed introduction to weights on von Neumann algebras we refer to \cite[Chapter VII]{Ta03}. A \emph{weight} on $\M$ is a map $\phi \colon \M_+ \to [0,\infty]$ such that 
\[ 
    \phi( \alpha x + \beta y) = \alpha \phi(x) + \beta \phi(y) 
    \quad \text{ for all } x, y \in \M_+ \text{ and } \alpha, \beta \in [0,\infty). 
\] Here we use the convention that $0 \cdot \infty = 0$.

We call $\phi$:
\begin{enumerate}
    \item \emph{normal} if $\phi( \sup_{i \in I} x_i) = \sup_{i \in I} \phi(x_i)$ for bounded, increasing nets $(x_i)_{i \in I}$ in $\M_+$;
    \item \emph{semifinite} if the set $\{ x \in \M_+ : \phi(x) < \infty \}$ generates $\M$ as a von Neumann algebra;
    \item \emph{faithful} if $\phi(x) = 0$ implies $x = 0$ for all $x \in \M_+$; and
    \item \emph{tracial} if $\phi(x^* x) = \phi(x x^*)$ for all $x \in \M$.
\end{enumerate}
For a weight $\phi$ the set
\begin{equation}
    \mathfrak{n}_\phi = \{ x \in \M : \phi(x^*x) < \infty \} \label{eq:weight-ideal}
\end{equation}
is a left ideal in $\M$, hence by \cite[Proposition II.3.12]{Ta02} its ultraweak closure is of the form $\mathcal{M}p_\phi$ for a uniquely determined projection $p_\phi \in \mathcal{M}$. The semifiniteness of $\phi$ is equivalent to $p_\phi = 1$.

We set $\mathfrak{n}_\phi^* = \{ x^* : x \in \mathfrak{n}_\phi \}$ and
\[ \mathfrak{m}_\phi = \mathrm{span} \{ x^*y : x,y \in \mathfrak{n}_\phi \} = \mathrm{span} \{ x y^* : x,y \in \mathfrak{n}_{\phi}^* \} . \]
The set $\mathfrak{m}_\phi$ is a hereditary $*$-subalgebra of $\M$ and every element of $\mathfrak{m}_\phi$ is a linear combination of four positive elements $x \in \M_+$ such that $\phi(x) < \infty$, see \cite[p.\ 41, Lemma 1.2]{Ta03}.

Similarly
\begin{equation}
    \mathcal{N}_\phi = \{ x \in \M : \phi(x^*x) = 0 \} \label{eq:null_ideal}
\end{equation}
is a left ideal in $\mathcal{M}$ and the \emph{support projection} of $\phi$ is the unique projection $\supp \phi$ such that the ultraweak closure of $\mathcal{N}_\phi$ equals $\mathcal{M} (1-\supp \phi)$. It satisfies \[
    \phi(x^* x) = 0 \iff x \supp \phi = 0, \quad x \in \M.
\] In particular, $\phi$ is faithful if and only if $\supp \phi = 1$.

As mentioned, $\tau$ will denote a normal, semifinite, faithful, tracial weight on $\M$. On the set $\mathfrak{n}_\tau$ we define the norm $\| x \|_2 = \tau(x^* x)^{1/2}$ and denote by $L^2(\M)$ the corresponding Hilbert space completion. We will represent $\M$ as a von Neumann subalgebra of $\mathcal{B}(L^2(\M))$ by identifying $x \in \M$ with the operator on $L^2(\M)$ given by sending $y \in \mathfrak{n}_\tau$ to $x y \in \mathfrak{n}_\tau$.

\subsection{The Radon--Nikodym Theorem}

Recall that a closed, densely defined operator $A$ on $L^2(\M)$ is said to be \emph{affiliated with $\M$} if it commutes with every unitary operator in the commutant of $\M$ inside $\mathcal{B}(L^2(\M))$. Every affiliated operator has a polar decomposition $A = u |A|$ where $u \in \M$ is a partial isometry and $|A|$ is a positive operator affiliated with $\M$. One calls $A$ \emph{invertible} if its kernel is zero, and in this case $A$ has a closed, densely defined inverse $A^{-1}$ which is also affiliated with $\M$.

For $y \in \M_+$ one may define a weight $\tau_y$ on $\M$ by 
\[ 
    \tau_y(x) = \tau(y^{1/2} x y^{1/2}), \quad x \in \M_+. 
\]

This definition may be extended to positive operators $A$ affiliated with $\M$ as follows. We denote
\[
    A_\epsilon = A (1 + \epsilon A)^{-1} \in \M_+, \quad \epsilon > 0.
\]
The net $(A_\epsilon)_{\epsilon > 0}$ is increasing in $\M_+$ as $\epsilon \to 0$, and
\[ 
    \tau_A(x) = \lim_{\epsilon \to 0} \tau_{A_\epsilon}(x), \quad x \in \M_+, 
\]
defines a normal and semifinite weight on $\M$, see \cite[p.\ 103, Lemma 2.8]{Ta03}.
The following Radon--Nikodym theorem will be instrumental to us, see \cite[p.\ 122, Theorem 3.14]{Ta03}.

\begin{theorem}\label{thm:radon-nikodym}
    For every normal, semifinite weight $\phi$ on $\M$ there exists a unique positive operator $A$ affiliated with $\M$ such that \[
        \phi = \tau_A. 
    \] The operator $A$ is called the \emph{Radon--Nikodym} derivative of $\phi$. Moreover, $A$ is invertible if and only if $\phi$ is faithful.
\end{theorem}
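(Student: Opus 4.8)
The plan is to reduce to the case of a \emph{bounded} positive normal functional, where the statement is the classical identification of the predual of a semifinite von Neumann algebra with its $L^1$-space, and then to pass to the general (possibly unbounded) case by a monotone limit. I would begin by invoking the structural fact that a normal weight is an upward-directed supremum $\phi = \sup_i \psi_i$ of bounded positive normal functionals $\psi_i \in M_*^+$ (decompose $\phi$ as a sum of vector functionals and take partial sums over finite subsets of the index set). For each $i$, the isometric order-isomorphism $M_* \cong L^1(M,\tau)$ valid for any semifinite von Neumann algebra, which carries the positive cone onto the positive cone, provides a unique positive operator $B_i$ affiliated with $M$ with $\tau(B_i) < \infty$ and $\psi_i(x) = \tau(B_i^{1/2} x B_i^{1/2}) = \tau_{B_i}(x)$ for all $x \in M_+$.

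Next I would organize the $B_i$ into a consistent family. If $\psi_i \le \psi_j$ then $\tau_{B_i} \le \tau_{B_j}$, and by the trace property this says $\tau\big(x(B_j - B_i)\big) \ge 0$ for all $x \in M_+$; testing against the spectral projection of the negative part of the self-adjoint $L^1$-element $B_j - B_i$ forces $B_i \le B_j$. Thus $(B_i)$ is an upward-directed family of positive affiliated operators, and I take $A$ to be its supremum: the positive self-adjoint operator on $L^2(M)$ whose closed quadratic form is the increasing limit of the forms $\xi \mapsto \| B_i^{1/2}\xi \|_2^2$; it is again positive, self-adjoint and affiliated with $M$ (possibly unbounded). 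Since $t \mapsto t(1+\epsilon t)^{-1}$ is operator monotone, $B_i \uparrow A$ gives $(B_i)_\epsilon \uparrow A_\epsilon$ for each fixed $\epsilon > 0$, so by normality of $\tau$ we get $\tau\big((B_i)_\epsilon^{1/2} x (B_i)_\epsilon^{1/2}\big) \uparrow \tau\big(A_\epsilon^{1/2} x A_\epsilon^{1/2}\big)$; interchanging the suprema over $i$ and over $\epsilon$ yields $\tau_A(x) = \sup_i \tau_{B_i}(x) = \sup_i \psi_i(x) = \phi(x)$ for all $x \in M_+$, that is, $\tau_A = \phi$.

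For uniqueness, if $\tau_A = \tau_B$, then restricting to the corners $eMe$ over projections $e$ with $\tau(e) < \infty$ and $\phi(e) < \infty$ (a family that still increases to $1$, and on which $x \mapsto \tau_A(exe)$ is a bounded functional on the finite von Neumann algebra $eMe$) recovers $eAe$ and $eBe$ as the $L^1(eMe, \tau|_{eMe})$-densities of the same functional, hence $eAe = eBe$; letting $e \uparrow 1$ gives $A = B$. Finally, $A_\epsilon$ has the same support projection $s(A)$ as $A$, so if $A$ is not invertible, i.e.\ $s(A) \ne 1$, then $\phi(1 - s(A)) = \lim_{\epsilon \to 0}\tau\big(A_\epsilon^{1/2}(1 - s(A))A_\epsilon^{1/2}\big) = 0$ with $1 - s(A) \ne 0$, so $\phi$ is not faithful; conversely, if $A$ is invertible and $\phi(x) = 0$ for some $x \in M_+$, then $\sup_\epsilon \tau\big(x^{1/2}A_\epsilon x^{1/2}\big) = 0$ forces $\tau\big(x^{1/2}A_\epsilon x^{1/2}\big) = 0$, hence $A_\epsilon^{1/2}x^{1/2} = 0$ by faithfulness of $\tau$, so $\operatorname{ran}(x^{1/2}) \subseteq \ker A_\epsilon = \ker A = \{0\}$ and $x = 0$.

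The main obstacle is not the bookkeeping but the two structural inputs: that a normal weight decomposes as an upward-directed supremum of bounded normal positive functionals, and the identification of the predual of a semifinite von Neumann algebra with its $L^1$-space (positive cone to positive cone) used for the bounded pieces. Once these are in hand, the passage to the possibly unbounded $A$ is a monotone-limit argument driven by normality of $\tau$ and the operator monotonicity of $t \mapsto t(1+\epsilon t)^{-1}$, and both the uniqueness and the invertibility criterion reduce to faithfulness of $\tau$.
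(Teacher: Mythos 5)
The paper does not prove this statement: it is quoted directly from \cite[p.~122, Theorem 3.14]{Ta03} (the Pedersen--Takesaki Radon--Nikodym theorem for traces), so there is no in-paper argument to compare against, and your proposal has to stand on its own. Its overall architecture (reduce to $M_*^+\cong L^1(M,\tau)_+$ via an upward-directed family $\psi_i\uparrow\phi$, then pass to a form limit) is the standard one and most of the steps are sound, but there is one genuine gap: you never use semifiniteness of $\phi$, and the construction cannot work without it. The increasing family $(B_i)$ always has a \emph{closed positive form} as its limit, but the form domain $\{\xi\in L^2(M): \sup_i\langle B_i\xi,\xi\rangle<\infty\}$ need not be dense, in which case the ``supremum'' is a self-adjoint operator on a proper closed subspace and is not an operator affiliated with $M$ in the sense required (indeed the theorem is false for normal non-semifinite weights, e.g.\ $\phi\equiv\infty$ on $M_+\setminus\{0\}$). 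You must show the form domain contains the $L^2$-dense set coming from $\mathfrak{n}_\phi$ --- for $\xi\in\mathfrak{n}_\phi^*\cap\mathfrak{n}_\tau$ one has $\sup_i\langle B_i\xi,\xi\rangle=\sup_i\psi_i(\xi\xi^*)=\phi(\xi\xi^*)<\infty$, and ultraweak density of $\mathfrak{n}_\phi$ plus Kaplansky gives density of this set in $L^2(M)$. This is exactly where semifiniteness enters, and omitting it leaves the definition of $A$ unjustified.

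Two further steps are under-justified, though repairable. First, operator monotonicity of $t\mapsto t(1+\epsilon t)^{-1}$ only gives $(B_i)_\epsilon\leq A_\epsilon$ and monotonicity in $i$; the equality $\sup_i (B_i)_\epsilon=A_\epsilon$ is the monotone convergence theorem for quadratic forms (strong resolvent convergence of an increasing net of forms to its closed limit), which is a genuine theorem you need to invoke, not a consequence of operator monotonicity alone. Second, your uniqueness argument presupposes a net of projections $e\uparrow 1$ with \emph{both} $\tau(e)<\infty$ and $\phi(e)<\infty$, which is not automatic for a general normal semifinite weight, and the meaning of ``$eAe$'' for unbounded affiliated $A$ under the limit $e\uparrow 1$ is delicate. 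A cleaner route, already implicit in the paper's Subsection 2.4, is to note that $\tau_A(\xi\xi^*)=\|A^{1/2}\xi\|_2^2$ with $\dom(A^{1/2})=\{\xi\in L^2(M):\tau_A(\xi\xi^*)<\infty\}$; hence $\tau_A=\tau_B$ forces the closed forms of $A^{1/2}$ and $B^{1/2}$ to coincide, so $A=B$ by the representation theorem for closed positive forms. Your treatment of the invertibility/faithfulness equivalence via the support projection $s(A)$ and monotonicity of $\epsilon\mapsto A_\epsilon$ is correct.
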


For a weight $\phi$ on $\M$ and a positive operator $B$ affiliated with $\M$ we also set \[ 
    \phi(B) = \sup_{\epsilon > 0} \phi(B_\epsilon) 
    \in [0, \infty]. 
\]

The following lemma will be applied frequently.

\begin{lemma}\label{lem:increasing_lemma}
Let $A$ be a positive self-adjoint operator affiliated with $\M$. Let $(f_n)_{n \in \N}$ be a pointwise increasing sequence bounded Borel functions on $\sigma(A)$ that converges pointwise to a bounded Borel function $f$ on $\sigma(A)$. Then for all $x \in \M$, the sequence $(\tau_{f_n(A)}(x))_{n \in \N}$ is increasing and converges to $\tau_{f(A)}(x)$.
\end{lemma}

\begin{proof}
By the functional calculus for unbounded operators, see \cite[Proposition 4.12 (v), Section 5.1, 5.2]{Sc12} we have that $(f_n(A))_{n \in \N}$ is an increasing sequence of operators that converges in the strong operator topology to $f(A)$. From \cite[Proposition 4.1]{PeTa73}, it follows that $\tau_{f_n(A)}(x)$ increases and converges to $\tau_{f(A)}(x)$ for all $x \in \M_+$.
\end{proof}

\subsection{Noncommutative integration}

An affiliated operator $A$ is called \emph{$\tau$-measurable} if there exists a projection $p \in \M \subseteq \mathcal{B}(L^2(\M))$ such that $p(L^2(\M)) \subseteq \dom(A)$, $\|A p\| < \infty$, and $\tau(1-p) < \infty$. We denote by $\meas{\M}$ the set of all $\tau$-measurable operators, and by $\meas{\M}_+$ the subset of positive operators. The set $\meas{\M}$ is a $*$-algebra with respect to the closure of the usual algebraic operations, cf.\ \cite[Chapter IX]{Ta03}.

The $L^p$-spaces associated with $\M$ are defined as \[ 
    L^p(\M) = \{ x \in \meas\M :  \|x\|_p < \infty \}, 
\]
where $\|x\|_p = \tau(|x|^p)^{1/p}$ for $x \in \meas\M_+$ and $1 \leq p < \infty$. One also sets $L^\infty(\M) = \M$. These are Banach spaces and the norms satisfy Hölder's inequality, that is,
\begin{align}
    \| x y \|_r \leq \| x \|_p \| y \|_q
\end{align}
whenever $x \in L^p(\M)$ and $y \in L^q(\M)$ with $1/p + 1/q = 1/r$ for $1 \leq p,q,r \leq \infty$.

\subsection{Admissibility}\label{subsec:admissibility}

In this subsection we fix a positive, self-adjoint and invertible operator $A$ affiliated with $\M$.

\begin{definition}\label{def:A-admissible}
    We say that an element $y \in \M$ is \emph{$A$-admissible} if it is an element of $\mathfrak{m}_{\tau_A}$, that is, if there exist $a_i, b_i \in \M$, $1 \leq i \leq n$, with $\tau_A(a_i a_i^*) < \infty$, $\tau_A(b_i b_i^*) < \infty$ such that
    \[ y = \sum_{i=1}^n a_i b_i^*. \]
\end{definition}

Recall that by definition $a \in \mathfrak{n}_{\tau_A}^*$ if and only if $\tau_A(aa^*) < \infty$.

\begin{lemma}\label{lem:A-compatible}
    Let $x \in \mathfrak n_{\tau_A}^*$. Then $(A_\epsilon^{1/2} x)_{\epsilon > 0}$ is a net of elements in $\mathfrak n_\tau$ that converges in $L^2(\M)$ as $\epsilon \to 0$.
\end{lemma}

\begin{proof}
    First, we check that $A_\epsilon^{1/2} x \in \mathfrak n_\tau$ for all $\epsilon > 0$: \[
        \tau( (A_\epsilon^{1/2} x) (A_\epsilon^{1/2} x)^* )
        = \tau_{A_\epsilon}(x x^*)
        \leq \tau_A(x x^*) < \infty.
    \]

Now let $\epsilon, \epsilon' > 0$. Note that if $\epsilon \leq \epsilon'$, then $A_{\epsilon} \geq A_{\epsilon'}$, so
\[ A_\epsilon^{1/2} A_{\epsilon'}^{1/2} = A_{\epsilon'}^{1/4} A_{\epsilon}^{1/2} A_{\epsilon'}^{1/4} \geq A_{\epsilon'}^{1/4} A_{\epsilon'}^{1/2} A_{\epsilon'}^{1/4} = A_{\epsilon'}. \]
Reversing the roles of $\epsilon$ and $\epsilon'$ shows that in either case we have
\[ A_{\epsilon}^{1/2} A_{\epsilon'}^{1/2} \geq A_{\max \{\epsilon, \epsilon' \}} . \]
Applying this, we see that \begin{align*}
    \tau(| A_\epsilon^{1/2} x - A_{\epsilon'}^{1/2} x |^2) 
    &= \tau(|A_\epsilon^{1/2} x|^2) + \tau(|A_{\epsilon'}^{1/2} x|^2) - \tau(x^* \Big( A_\epsilon^{1/2} A_{\epsilon'}^{1/2} + A_{\epsilon'}^{1/2} A_\epsilon^{1/2} \Big) x) \\
    &\leq 2 \tau_{A}(x x^*) - 2\tau(x^* A_{\max\{\epsilon, \epsilon'\}} x) \\
    &= 2 \big( \tau_A(xx^*) - \tau_{A_{\max\{\epsilon, \epsilon'\}} }(xx^*) \big) .
\end{align*}
Since $\tau_{A_\epsilon}(xx^*) \to \tau_A(xx^*) < \infty$ as $\epsilon \to 0$, the above estimate shows that the net is Cauchy in $L^2(\M)$. This finishes the proof.
\end{proof}

\begin{proposition}\label{prop:defn-L1MA-as-banachspace}
    If $y \in \M$ is $A$-admissible, then the net $(A_\epsilon^{1/2} y A_\epsilon^{1/2})_{\epsilon > 0}$ converges in $L^1(\M)$ as $\epsilon \to 0$, and we denote its limit by
    \[ A^{1/2} y A^{1/2} = \lim_{\epsilon \to 0} A_\epsilon^{1/2} y A_\epsilon^{1/2} \; \in L^1(\M). \]
    Moreover, the function \[ 
        \| y \|_{1, A} = \tau( | A^{1/2} y A^{1/2} | ), \quad y \in \mathfrak{m}_{\tau_A},
    \]
    defines a norm on the set of $A$-admissible elements. We denote resulting Banach space completion by $L^1(\M, A)$.
\end{proposition}

\begin{proof}
    Write $y = \sum_{i=1}^n a_i b_i^*$ with $a_i, b_i \in \mathfrak{n}_{\tau_A}^*$. Then
    \[ A_\epsilon^{1/2} y A_\epsilon^{1/2} = \sum_{i=1}^n (A_\epsilon^{1/2} a_i) (A_\epsilon^{1/2} b_i)^*. \]
    By \Cref{lem:A-compatible}, the nets $(A_\epsilon^{1/2} a_i)_{\epsilon > 0}$ and $(A_\epsilon^{1/2} b_i)_{\epsilon > 0}$ converge in $L^2(\M)$, so it follows from the above identity that $(A_\epsilon^{1/2} y A_\epsilon^{1/2})_{\epsilon > 0}$ converges in $L^1(\M)$.

    Since the assignment $\mathfrak{m}_{\tau_A} \to L^1(\M)$, $y \mapsto D^{1/2} y D^{1/2}$ is linear, $\| \cdot \|_{1, A}$ evidently satisfies the triangle inequality and absolute homogeneity.

Suppose that $y = \sum_{i=1}^n a_i b_i^*$ with $\tau_A(a_i b_i^*), \tau_A(b_i b_i^*)< \infty$ satisfies $\| y \|_{1, A} = 0$, so that $A^{1/2} y A^{1/2} = 0$. Letting $e_r$ be the spectral projection of $A$ onto the interval $[1/r,r]$, the operators $A^{-1/2} e_r = e_r A^{-1/2}$ are bounded for each $r > 0$. The estimate \[ 
    \| e_r - (1 + \epsilon A)^{-1} e_r \| 
    = \sup_{\lambda \in \sigma(A) \cap [1/r,r]} \Big| 1 - \frac{1}{1+ \epsilon \lambda^{-1}} \Big| 
    \leq \frac{\epsilon r}{1 + \epsilon r}. 
\]
shows that $(1 + \epsilon A^{-1})e_r \to e_r$ in the norm of $\M$ as $\epsilon \to 0$. Consequently \[ 
    (e_r A^{-1/2}) (A^{1/2} a_i) 
    = \lim_{\epsilon \to 0} \, e_r A^{-1/2} A_\epsilon^{1/2} a_i
    = \lim_{\epsilon \to 0} \, (1 + \epsilon A)^{-1} e_r a_i 
    = e_r a_i. 
\]
A similar computation shows that $(A^{1/2} b_i)^* (A^{-1/2} e_r) = b_i^* e_r$. Hence \[
    0 = (e_r A^{-1/2}) ( A^{1/2} y A^{1/2} ) (A^{-1/2} e_r) 
    = \sum_{i=1}^n e_r a_i b_i^* e_r = e_r y e_r. 
\]
Since $e_r \to I$ strongly as $r \to \infty$, we conclude that $y = 0$.
\end{proof}

\section{Group actions}\label{sec:group-actions}

As mentioned in the introduction, $G$ denotes a locally compact group with a left Haar measure $m$. We denote by $\Delta \colon G \to (0,\infty)$ the modular function of $G$, so that \[ 
    m(B s) = \Delta(s) m(B), 
    \quad B \subseteq G \text{ Borel}, \; s \in G. 
\]
We also fix an action $\alpha$ of $G$ on $\M$, by which we understand a group homomorphism from $G$ into the group $\operatorname{Aut}(\M)$ of $*$-automorphisms of the von Neumann algebra $\M$, which satisfies that for every $x \in \M$ the map $G \to \M$, $s \mapsto \alpha_s(x)$ is continuous with respect to the ultraweak topology on $\M$. We assume the action to be \emph{ergodic}, that is, the fixed point subalgebra \[ 
    \M^G = \{ x \in \M : \alpha_s(x) = x \text{ for all } s \in G \} 
\]
coincides with the scalar multiples of the identity of $\M$.

We also assume that the action of $G$ on $\M$ is \emph{$\tau$-preserving}, that is,
\begin{align*}
    \tau(\alpha_s(x)) = \tau(x), 
    \quad s \in G, \; x \in \M. 
\end{align*}
This induces an isometric action of the group $G$ on the Banach space $L^p(\M)$ for any $1 \leq p \leq \infty$, still denoted by $\alpha$.

The following proposition extends the action of $G$ on $\M$ to the space of operators affiliated with $\M$. Recall that we represent all operators on $L^2(\M)$.

\begin{proposition}\label{prop:action-affiliated}
    Let $A$ be an operator affiliated with $\M$ and let $s \in G$. Then there exists a unique operator $\alpha_s(A)$ affiliated with $\M$ such that $\dom(\alpha_s(A)) = \alpha_s(\dom(A))$ and such that
    \begin{equation}
        \alpha_s(A)\xi = \alpha_s(A (\alpha_{s^{-1}}(\xi))), 
        \quad \xi \in L^2(M) . \label{eq:action-affiliated}
    \end{equation}
\end{proposition}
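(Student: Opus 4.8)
The plan is to construct $gA$ by conjugating $A$ with the canonical unitary on $L^2(M)$ that implements $g$, and then to verify that conjugation by a unitary normalizing $M$ preserves all of the relevant structure: dense domain, closedness, and affiliation.

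First I would make the action of $G$ on $L^2(M)$ explicit. Since $\tau$ is $\tau$-preserving, for each $g \in G$ the map $y \mapsto gy$ sends $\mathfrak{n}_\tau$ isometrically onto itself in the $\|\cdot\|_2$-norm, hence extends to a unitary $U_g$ on $L^2(M)$; moreover $g \mapsto U_g$ is a unitary representation of $G$ with $U_g \xi = g\xi$ for $\xi \in L^2(M)$ and $U_g^* = U_{g^{-1}}$. A one-line computation on the dense subspace $\mathfrak{n}_\tau$, using that each $g$ acts as a $*$-automorphism (so $g(ab) = (ga)(gb)$), shows that conjugation by $U_g$ implements the action on $M$, i.e.\ $U_g x U_g^* = gx$ for all $x \in M$. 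Consequently $U_g M U_g^* = M$, and taking commutants gives $U_g M' U_g^* = M'$.

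Next I would set $gA := U_g A U_g^*$, that is, the operator $\xi \mapsto U_g\bigl(A(U_g^*\xi)\bigr)$ with domain $U_g(\dom A)$. Because $U_g$ is unitary and $A$ is densely defined and closed, so is $gA$: density of $U_g(\dom A)$ is clear, and closedness follows at once since $U_g^*$ is continuous and $A$ is closed. Since $U_g^*\xi = U_{g^{-1}}\xi = g^{-1}\xi$, the domain of $gA$ is $U_g(\dom A) = g(\dom A)$ and $(gA)\xi = g\bigl(A(g^{-1}\xi)\bigr)$, which is precisely \eqref{eq:action-affiliated}.

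Finally I would check that $gA$ is affiliated with $M$, and that it is unique. Given a unitary $v' \in M'$, the element $w' := U_g^* v' U_g$ is again a unitary in $M'$ by the first step; since $A$ is affiliated with $M$ we have $w' A = A w'$ with $w'(\dom A) = \dom A$, and conjugating this identity by $U_g$ yields $v'(gA) = (gA)v'$ together with $v'(\dom gA) = \dom gA$, so $gA$ is affiliated with $M$. Uniqueness is automatic, because an operator on $L^2(M)$ is determined by its domain and its action there, so any affiliated operator meeting the two stated conditions must coincide with $U_g A U_g^*$. The only point that requires genuine care — as opposed to formal manipulation of closed operators under unitary conjugation — is the identification of the $L^2$-action with conjugation by $U_g$ and, via commutants, the fact that $U_g$ normalizes $M'$; once that is established the remainder is bookkeeping.
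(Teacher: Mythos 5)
Your proof is correct and follows essentially the same route as the paper: both define $gA$ as the conjugate of $A$ by the $L^2(M)$-extension of the action and verify affiliation by showing that this extension normalizes the commutant $M'$. The only difference is presentational — you make the implementing unitary $U_g$ explicit and invoke standard facts about unitary conjugation of closed operators, whereas the paper verifies density, closedness, and the commutant computation by hand with the unitary left implicit.
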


\begin{proof}
    It is clear that \eqref{eq:action-affiliated} uniquely defines a linear operator $\alpha_s(A)$ on $L^2(\M)$ with the claimed domain. It remains to show that it is affiliated with $\M$. It is densely defined since $\dom(A)$ is dense in $L^2(\M)$ and $\xi \mapsto \alpha_s(\xi)$ is a homeomorphism. Furthermore, suppose $\xi_n \to \xi$ in $\dom(\alpha_s(A))$ and $\alpha_s(A) \xi_n \to \eta$ in $L^2(\M)$. Then $\alpha_{s^{-1}}(\xi_n) \to \alpha_{s^{-1}}(\xi)$ in $\dom(A)$ and $A (\alpha_{s^{-1}}(\xi_n)) \to \alpha_{s^{-1}}(\eta)$, so $A \alpha_{s^{-1}}(\xi) = \alpha_{s^{-1}}(\eta)$ since $A$ is closed. It follows that $\alpha_s(A) \xi = \alpha_s(A (\alpha_{s^{-1}}(\xi))) = \eta$, which shows that $\alpha_s(A)$ is closed. Finally, suppose $u$ is any linear operator in the commutant of $\M$ on $L^2(\M)$. Then the equation 
    \[
        \alpha_{s^{-1}}(u) x 
        = \alpha_{s^{-1}}( u \alpha_s(x) ) 
        = \alpha_{s^{-1}}( \alpha_s(x) u ) 
        = x \alpha_{s^{-1}}(u)
    \]
    shows that $\alpha_{s^{-1}}(u)$ is also in the commutant of $\M$ on $L^2(\M)$. Using that $A$ is affiliated we obtain 
    \[ 
        \alpha_s(A) u \xi 
        = \alpha_s(A \alpha_{s^{-1}}(u \xi)) 
        = \alpha_s(A \alpha_{s^{-1}}(u) (\alpha_{s^{-1}}(\xi))) 
        = \alpha_s(\alpha_{s^{-1}}(u) A (\alpha_{s^{-1}}(\xi))) 
        = u \alpha_s(A) \xi 
    \]
    for every $\xi \in L^2(\M)$. Since $u$ was arbitrary, this shows that $\alpha_s(A)$ is affiliated with $\M$.
\end{proof}

\begin{lemma}\label{lemma:epsilon-action}
    For all operators $A$ affiliated with $\M$ and all $s \in G$ we have \[
        (\alpha_s(A))_\epsilon = \alpha_s(A_\epsilon), \quad \epsilon > 0.
    \]
\end{lemma}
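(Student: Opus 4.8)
The plan is to reduce the claim to the elementary fact that conjugation by a unitary commutes with the resolvent operation $A \mapsto A(1+\epsilon A)^{-1}$, by passing to the spatial implementation of the $G$-action. Since the action of $G$ on $M$ is $\tau$-preserving, left translation $x \mapsto gx$ on $\mathfrak{n}_\tau$ is $\|\cdot\|_2$-isometric and invertible, hence extends to a unitary operator $U_g$ on $L^2(M)$; this is precisely the isometric action of $G$ on $L^2(M)$ noted in \Cref{sec:group-actions}, and a short computation on $\mathfrak{n}_\tau$ (using $g(xy) = (gx)(gy)$ for $x \in M$, $y \in \mathfrak{n}_\tau$) gives $gx = U_g x U_g^*$ for all $x \in M$. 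Comparing this with the defining formula \eqref{eq:action-affiliated} of \Cref{prop:action-affiliated} — in which $g(\cdot)$ and $g^{-1}(\cdot)$ on $L^2(M)$ are exactly $U_g$ and $U_g^*$ — one reads off that $\dom(gA) = g(\dom A) = U_g(\dom A)$ and $(gA)\xi = U_g A U_g^* \xi$, that is,
\[ gA = U_g A U_g^* \qquad \text{for every operator } A \text{ affiliated with } M. \]

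With this identity in hand the proof is immediate. Fix $\epsilon > 0$ and a positive operator $A$ affiliated with $M$, so that $A_\epsilon = A(1+\epsilon A)^{-1} \in M_+$. Since $U_g$ is unitary, $1 + \epsilon\, gA = U_g(1+\epsilon A)U_g^*$ is invertible with inverse $U_g(1+\epsilon A)^{-1}U_g^*$, and therefore
\[ (gA)_\epsilon = (gA)\bigl(1+\epsilon\, gA\bigr)^{-1} = U_g A U_g^*\, U_g(1+\epsilon A)^{-1}U_g^* = U_g\,A(1+\epsilon A)^{-1}\,U_g^* = U_g A_\epsilon U_g^* . \]
Applying the displayed identity once more, now to the bounded operator $A_\epsilon \in M$, the right-hand side equals $g(A_\epsilon)$, which is the assertion. (The same computation works verbatim for any affiliated $A$ with $-\epsilon^{-1} \notin \sigma(A)$, since conjugation by $U_g$ commutes with the resolvent of $A$ at $-\epsilon^{-1}$.)

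I do not anticipate a genuine obstacle: the content here is purely bookkeeping. The only points deserving a little care are the identification of left translation on $\mathfrak{n}_\tau$ with the unitary implementing the $G$-action on $L^2(M)$, and keeping domains straight in $gA = U_g A U_g^*$; both are routine once the definitions are unwound. Alternatively one can avoid $U_g$ altogether and argue straight from \eqref{eq:action-affiliated}: first verify $g(1+\epsilon A) = 1 + \epsilon\, gA$ on the domain $g(\dom A)$, then deduce $g\bigl((1+\epsilon A)^{-1}\bigr) = (1+\epsilon\, gA)^{-1}$ from the multiplicativity of $g$, and finally compose; the conjugation viewpoint merely makes all of this transparent.
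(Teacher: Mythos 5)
Your proof is correct, and it takes a different (though closely related) route from the paper's. The paper stays entirely inside the algebraic formalism of \Cref{prop:action-affiliated}: writing $a=(1+\epsilon A)^{-1}$, it uses multiplicativity $g(Aa)=(gA)(ga)$ to check directly that $ga$ is a two-sided inverse of $1+\epsilon\,gA$, and then concludes $g(A_\epsilon)=(gA)(ga)=(gA)_\epsilon$; this is essentially the ``alternative'' you sketch in your last sentence. Your main argument instead makes the spatial implementation explicit: since the action is $\tau$-preserving, it is implemented by unitaries $U_g$ on $L^2(M)$ with $gx=U_gxU_g^*$ for $x\in M$, and comparison with \eqref{eq:action-affiliated} gives $gA=U_gAU_g^*$ for affiliated $A$, after which the lemma is the trivial fact that unitary conjugation commutes with the resolvent map $A\mapsto A(1+\epsilon A)^{-1}$. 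Both arguments are sound; the paper's version avoids introducing $U_g$ (which it never uses elsewhere) and needs only the multiplicativity of the extended action, while yours buys a genuinely reusable identity --- $gA=U_gAU_g^*$ also immediately yields the affiliation statement of \Cref{prop:action-affiliated} and the compatibility of the action with the full functional calculus invoked later (e.g.\ $gA_\epsilon^{1/2}=(gA)_\epsilon^{1/2}$ in \Cref{lem:action-weight-affiliated}). The only points to be careful about, which you do address, are verifying $gx=U_gxU_g^*$ on the dense subspace $\mathfrak{n}_\tau$ and matching domains in $1+\epsilon\,gA=U_g(1+\epsilon A)U_g^*$; both check out.
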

\begin{proof}
    Let $\epsilon > 0$ and $a = (1+\epsilon A)^{-1} \in \M$. Since $\alpha_s(A a) = \alpha_s(A)\alpha_s(a)$, one observes \[
        1 = \alpha_s(1) = \alpha_s((1+\epsilon A) a) = \alpha_s(a + \epsilon A a) = \alpha_s(a) + \epsilon \alpha_s(A) \alpha_s(a)  = (1 + \epsilon \alpha_s(A)) \alpha_s(a).
    \] Similarly $\alpha_s(a)(1 + \epsilon \alpha_s(A)) = 1$, so one finds $\alpha_s(a) = (1 + \epsilon \alpha_s(A))^{-1}$. Hence \[
        \alpha_s(A_\epsilon) = \alpha_s(A a) = \alpha_s(A) \alpha_s(a) = \alpha_s(A) (1 + \epsilon \alpha_s(A))^{-1} = (\alpha_s(A))_\epsilon.
    \]
\end{proof}

\begin{proposition}\label{prop:ergodic-affiliated}
    Let $A$ be a closed, densely defined operator affiliated with $\M$ such that $\alpha_s(A) = A$ for every $s \in G$. Then $A = \lambda 1$ for some $\lambda \in \C$.
\end{proposition}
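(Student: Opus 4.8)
The plan is to reduce the statement to the bounded case by applying the hypothesis $gA = A$ to the bounded operators $A_\epsilon$, and then invoke ergodicity of the action on $M$. First I would observe that, by \Cref{lemma:epsilon-action}, the assumption $gA = A$ for all $g \in G$ gives $g(A_\epsilon) = (gA)_\epsilon = A_\epsilon$ for every $\epsilon > 0$, so each $A_\epsilon$ lies in the fixed-point algebra $M^G$. By ergodicity, $M^G = \C 1$, hence $A_\epsilon = \lambda_\epsilon 1$ for scalars $\lambda_\epsilon \in [0,\infty)$ (positivity of $A_\epsilon$ forces $\lambda_\epsilon \geq 0$).

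Next I would pass to the limit $\epsilon \to 0$. Since $A_\epsilon = A(1+\epsilon A)^{-1} = \lambda_\epsilon 1$, applying the spectral theorem for $A$ shows that $t(1+\epsilon t)^{-1} = \lambda_\epsilon$ for $\mu$-almost every $t$ in the spectrum of $A$, where $\mu$ is any spectral measure; since $t \mapsto t(1+\epsilon t)^{-1}$ is injective on $[0,\infty)$, the spectrum of $A$ must be a single point $\lambda \in [0,\infty)$, with $\lambda_\epsilon = \lambda(1+\epsilon\lambda)^{-1}$. Therefore $A = \lambda 1$. (Alternatively, one notes $A_\epsilon \to A$ in the strong resolvent sense, or uses that $A_\epsilon$ increases to $A$ in the form sense as $\epsilon \to 0$, and a monotone increasing net of scalars $\lambda_\epsilon 1$ either converges to a scalar or to $+\infty$; but since $A$ is assumed to be a genuine closed densely defined operator — not identically $+\infty$ — the limit is a finite scalar $\lambda$, giving $A = \lambda 1$.) Finally, $\lambda$ is real and nonnegative here because $A$ was taken positive; for a general closed densely defined affiliated operator with $gA = A$ one would first apply the argument to $|A|$ and to the partial isometry $u$ in its polar decomposition (which also satisfies $gu = u$, hence lies in $M^G = \C1$), to conclude $A = \lambda 1$ with $\lambda \in \C$.

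The main obstacle I anticipate is the limiting step: making rigorous the passage from ``$A_\epsilon = \lambda_\epsilon 1$ for all $\epsilon > 0$'' to ``$A = \lambda 1$'', since $A$ is possibly unbounded and one must be careful that the family $(\lambda_\epsilon)$ does not run off to infinity in a way incompatible with $A$ being a densely defined operator. The cleanest route is via the spectral theorem as above: $\mathbbm{1}_S(A)$ for a Borel set $S$ is determined by the $A_\epsilon$, so $A_\epsilon$ being scalar forces the spectral measure of $A$ to be supported at one point. Everything else is routine manipulation with \Cref{lemma:epsilon-action} and the definition of ergodicity.
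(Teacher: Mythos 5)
Your proof is correct and follows the same overall strategy as the paper: produce bounded, $G$-invariant elements of $M$ from $A$ by functional calculus, apply ergodicity to conclude they are scalars, recover $A$ from them, and handle non-positive $A$ via the polar decomposition. The only real difference is the choice of bounded elements: the paper uses the spectral projections $P_S(A)=\mathbbm{1}_S(A)$ (whose invariance follows from compatibility of the automorphisms with the functional calculus), whereas you use the resolvent regularizations $A_\epsilon$ together with \Cref{lemma:epsilon-action}. Your route has the small advantage that injectivity of $t\mapsto t(1+\epsilon t)^{-1}$ forces the spectral measure of $A$ to be a point mass, so $A=\lambda 1$ falls out directly; the paper instead shows $\langle A\xi,\eta\rangle=c\langle\xi,\eta\rangle$ on the domain and then appeals to \cite[Lemma IX.2.7]{Ta03} to upgrade this to equality of operators. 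Your parenthetical "monotone limit" alternative is the weakest part of the write-up and is not needed given the spectral argument. One step you should spell out in the general case: the invariance $g|A|=|A|$ and $gu=u$ is not automatic from $gA=A$ alone but follows from the uniqueness of the polar decomposition applied to $A=gA=(gu)(g|A|)$, exactly as the paper does.
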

\begin{proof}
    First, consider a positive affiliated operator $A$. Denote by $f \mapsto f(A)$ the spectral calculus of $A$, and by $\mathbbm{1}_B$ the indicator functions of Borel subsets $B \subseteq \R$. Then $P_B(A) = \mathbbm{1}_B(A) \in \M$ yield the spectral projections of $A$. For all $\xi,\eta \in \dom(A)$ we denoted by $\mu_{\xi,\eta}^A$ the spectral measures on $\R$ which are defined by $\mu_{\xi,\eta}^A(B) = \langle P_B(A)\xi, \eta \rangle$. The spectral theorem states that
    \begin{equation}\label{eq:spectral-theorem}
        \langle A\xi, \eta \rangle = \int_0^\infty t \dif{\mu_{\xi,\eta}^A(t)}.
    \end{equation}
    
    Fix any Borel subset $B \subseteq \R$ and $s \in G$. As the automorphism $\alpha_s$ is compatible with the functional calculus, we find $\alpha_s(P_B(A)) = P_B(\alpha_s(A))$. As $A$ is assumed to be invariant, it follows that $\alpha_s(P_B(A)) = P_B(A)$. As $s \in G$ was arbitrary and the action ergodic, there exists a scalar $\mu(B) \geq 0$ for which $P_B(A) = \mu(B) 1$. This shows $\mu_{\xi,\eta}^A(B) = \mu(B) \langle \xi,\eta\rangle$ for all $\xi,\eta \in \dom(A)$. 
    
    The properties of the spectral projections $P_B(A)$ imply that $\mu$ is a positive measure on $\R$. With $c = \int_0^\infty t \dif\mu(t) \in [0,\infty]$ the equality \eqref{eq:spectral-theorem} reduces to $\langle A\xi, \eta \rangle = c \langle \xi,\eta\rangle$ for all $\xi,\eta \in \dom(A)$, showing that $c < \infty$ and that $A$ coincides with $c 1 \in \M$ on its dense domain of definition. From \cite[Lemma IX.2.7]{Ta03} it follows that $A$ must have been everywhere defined and $A = c 1$. 
    
    For a general affiliated operator $A$, consider the polar decomposition $A = u |A|$ with partial isometry $u \in \M$ and positive, affiliated operator $|A|$. Let $s \in G$. Then $A = \alpha_s(A) = \alpha_s(u) \alpha_s(|A|)$. In this decomposition $\alpha_s(|A|)$ is a positive operator and $\alpha_s(u) \in \M$ a partial isometry with initial space being the range of $\alpha_s(|A|)$. By the uniqueness of the polar decomposition, $|A| = \alpha_s(|A|)$ and $u = \alpha_s(u) \in \M$. As $s \in G$ was arbitrary and the action ergodic, $u = \lambda 1$ for some $\lambda \in \C$ with $|\lambda| = 1$, and by the above $|A| = c 1$ for some $c \geq 0$. Then $A = u |A| = \lambda c 1$ shows the claim.
\end{proof}

For a weight $\phi$ on $\M$ and an element $s \in G$ we define the weight $\alpha_s(\phi)$ on $\M$ via \[
    \alpha_s(\phi)(x) = \phi(\alpha_{s^{-1}}(x)), \quad x \in \M.
\]
Let $\chi$ be a character on $G$, that is, a continuous group homomorphism from $G$ into the group of positive real numbers under multiplication. A weight $\phi$ is called \emph{semi-invariant} with respect to $\chi$ if \[ 
    \alpha_s(\phi) = \chi(s)\phi, \quad s \in G. 
\]
We say that $\phi$ is \emph{invariant} if it is semi-invariant with respect to the trivial character, i.e.,\ $\alpha_s (\phi) = \phi$ for all $s \in G$.

\begin{lemma}\label{lemma:invariant-ideal}
    Let $\phi$ be a semi-invariant weight on $\M$. Then the following hold:
    \begin{enumerate}
        \item If there exists a nonzero $x \in \meas \M_+$ such that $\phi(x) < \infty$, then $\phi$ is semifinite.
        \item If there exists a nonzero $x \in \meas \M_+$ such that $\phi(x) \neq 0$, then $\phi$ is faithful.
    \end{enumerate}
\end{lemma}

\begin{proof}
    (i) We let $p_\phi$ be the unique projection such that $\mathcal{I} = \M p_\phi$, where $\mathcal{I}$ denotes the ultraweak closure of $\mathfrak{n}_\phi$ as in \eqref{eq:weight-ideal}. Note that since $\lim_{\epsilon \to 0} \phi(x_\epsilon) = \phi(x) < \infty$ implies that $\phi(x_\epsilon) < \infty$ for sufficiently small $\epsilon > 0$, we may assume that $x \in \M_+$. Then $x^{1/2} \in \M_+$ is a nonzero element of $\mathfrak n_\phi$, so $p_\phi \neq 0$.
    
    Observe next that $y \in \mathfrak n_\phi$ implies $\alpha_s(y) \in \mathfrak{n}_\phi$ for all $s \in G$, since
    \[
        \phi( \alpha_s(y)^* \alpha_s(y) ) 
        = \phi( \alpha_s( y^* y ) ) 
        = \chi(s)^{-1} \phi(y^* y) 
        < \infty. 
    \]
    Since $\mathcal{I} = \alpha_s(\mathcal{I}) = \alpha_s(\M p_\phi) = \M \alpha_s(p_\phi)$, the uniqueness of the projection implies that $p_\phi = \alpha_s(p_\phi)$ for all $s \in G$. Since the action is ergodic, we deduce that $p_\phi$ is a scalar multiple of the identity. As $p_\phi \neq 0$, the only possibility is that $p_\phi = 1$. Thus $\phi$ is semifinite.

    (ii) Let $\supp \phi$ be the support projection of $\phi$, that is, $\mathcal{J} = \M (1 - \supp \phi)$ where $\mathcal{J}$ is the ultraweak closure of $\mathcal{N}_\phi$ as in \eqref{eq:null_ideal}. Similarly to (i) we may assume that some nonzero $x \in \M_+$ satisfies $\phi(x) \neq 0$. Then $x^{1/2} \in \mathcal{N}_\phi$, so that $\supp \phi \neq 0$. By an argument analogous to (i) we find $\alpha_s(\mathcal{J}) = \mathcal{J}$ for all $s \in G$, so the uniqueness of $\supp \phi$ together with the ergodicity of the action implies that $\supp \phi$ is a scalar multiple of the identity. Since $\supp \phi \neq 0$, the only possibility is that $\supp \phi = 1$, so $\phi$ is faithful.
\end{proof}

The following proposition describes how the action of $G$ on $\M$ interacts with the Radon--Nikodym derivative of an associated semifinite weight. Here, the action is extended to the operators affiliated with $\M$ as in \Cref{prop:action-affiliated}.

\begin{proposition}\label{prop:semiinvariant-weights}
    Let $\phi$ be a normal, semifinite weight on $\M$ which is semi-invariant with respect to a character $\chi$. Denote by $A$ the Radon--Nikodym derivative of $\phi$. Then \[ 
        \alpha_s(A) = \chi(s) A, \quad s \in G.
    \]
    In particular, if $\phi$ is invariant, then $A = \lambda I$ for some $\lambda \geq 0$ and $\phi = \lambda \tau$.
\end{proposition}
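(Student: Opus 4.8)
The plan is to reduce everything to the uniqueness clause of the Radon--Nikodym theorem (\Cref{thm:radon-nikodym}), using \Cref{lem:action-weight-affiliated} to commute the $G$-action past the assignment $B \mapsto \tau_B$. Write $\phi = \tau_A$, which is legitimate by \Cref{thm:radon-nikodym} since $\phi$ is normal and semifinite. Fix $g \in G$. The first thing I would record is that $g\phi$, defined by $(g\phi)(x) = \phi(g^{-1}x)$ for $x \in M_+$, is again a normal, semifinite weight: it is manifestly a weight; it is normal because $g^{-1}$ acts as an ultraweakly continuous $*$-automorphism and hence preserves suprema of bounded increasing nets in $M_+$; and it is semifinite because $g^{-1}$ carries $\{x \in M_+ : \phi(x) < \infty\}$, which generates $M$, bijectively onto $\{x \in M_+ : (g\phi)(x) < \infty\}$. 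Hence $g\phi$ admits a unique Radon--Nikodym derivative, and it suffices to identify that derivative in two ways.

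On the one hand, the action is $\tau$-preserving, so $\tau$ is invariant, i.e.\ semi-invariant with respect to the trivial character; applying \Cref{lem:action-weight-affiliated} with $\tau$ in place of its semi-invariant weight gives $g\tau_A = \tau_{gA}$, that is, $g\phi = \tau_{gA}$. On the other hand, semi-invariance of $\phi$ yields $g\phi = \chi(g)\phi = \chi(g)\tau_A$, and a short computation with the approximants $A_\epsilon$ shows $\chi(g)\tau_A = \tau_{\chi(g)A}$: writing $c = \chi(g) > 0$ one has $(cA)_\epsilon = c\,A_{c\epsilon}$, hence $\tau_{(cA)_\epsilon}(x) = c\,\tau_{A_{c\epsilon}}(x)$, and letting $\epsilon \to 0$ gives $\tau_{cA} = c\,\tau_A$. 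Comparing the two expressions, $\tau_{gA} = \tau_{\chi(g)A}$, and since $gA$ and $\chi(g)A$ are both positive operators affiliated with $M$, the uniqueness part of \Cref{thm:radon-nikodym} forces $gA = \chi(g)A$, which is the asserted semi-invariance relation.

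For the final assertion, assume $\phi$ is invariant, so $\chi \equiv 1$ and the relation just proved reads $gA = A$ for all $g \in G$. Since $A$ is a closed, densely defined operator affiliated with $M$ and the action is ergodic, \Cref{prop:ergodic-affiliated} gives $A = \lambda 1$ for some $\lambda \in \C$; positivity of $A$ forces $\lambda \geq 0$. Then $\phi = \tau_A = \tau_{\lambda 1} = \lambda\tau$.

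The only mildly technical point is the bookkeeping with the defining $\epsilon$-limits of the weights $\tau_A$, $\tau_{gA}$ and $\tau_{\chi(g)A}$ — in particular that a positive scalar may be pulled through the weight already at the level of the approximants, and that \Cref{lem:action-weight-affiliated} indeed applies with $\tau$ in the role of its semi-invariant weight. I do not anticipate a genuine obstacle: the statement is in essence the observation that $B \mapsto \tau_B$ intertwines the $G$-actions on affiliated positive operators and on normal semifinite weights, so that after invoking uniqueness the semi-invariance of $\phi$ is transported verbatim into the semi-invariance of $A$.
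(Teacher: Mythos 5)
Your proposal is correct and follows essentially the same route as the paper: apply \Cref{lem:action-weight-affiliated} to the invariant weight $\tau$ to get $g\tau_A=\tau_{gA}$, identify this with $\chi(g)\phi=\tau_{\chi(g)A}$, and invoke the uniqueness clause of \Cref{thm:radon-nikodym}, then use \Cref{prop:ergodic-affiliated} for the invariant ergodic case. Your extra checks (that $g\phi$ is normal and semifinite, and that $(\chi(g)A)_\epsilon=\chi(g)A_{\chi(g)\epsilon}$ so scalars pull through the $\epsilon$-limits) are details the paper leaves implicit, and they are correct.
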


\begin{proof}
    Let $s \in G$ and $\epsilon > 0$. Then $\alpha_s(A_\epsilon) = (\alpha_s(A))_\epsilon$ as seen in \cref{lemma:epsilon-action}. By the compatibility of the automorphism $\alpha_s$ with the functional calculus we find $\alpha_s( A_\epsilon^{1/2} ) = (\alpha_s(A))_\epsilon^{1/2}$. Using the invariance of $\tau$, for all $x \in \M_+$ we find \[
        \tau_{ \alpha_s(A)_\epsilon }(x)
        = \tau( \alpha_s(A)_\epsilon^{1/2} x \alpha_s(A)_\epsilon^{1/2} )
        = \tau( A_\epsilon^{1/2} \alpha_{s^{-1}}(x) A_\epsilon^{1/2} )
        = \tau_{A_\epsilon}( \alpha_{s^{-1}}(x) ). 
    \]
    Letting $\epsilon \to 0$, we obtain \[ 
        \tau_{\alpha_s(A)} = \alpha_s(\tau_A) = \alpha_s(\phi) = \chi(s) \phi = \chi(s) \tau_A = \tau_{\chi(s)A}. 
    \]
    The uniqueness of the Radon--Nikodym derivative $A$ then implies that $\alpha_s(A) = \chi(s) A$. 
    
    Whenever the action is ergodic and $\phi$ invariant, \Cref{prop:ergodic-affiliated} yields $A = \lambda 1$, $\lambda \geq 0$.
\end{proof}

\begin{lemma}\label{lemma:action-weight-measurable}
    Let $\phi$ be a normal, semifinite weight on $\M$. For all positive operators $x$ affiliated with $\M$, the function \[
        G \to [0,\infty], \, s \mapsto \alpha_s(\phi)(x)
    \] is measurable.
\end{lemma}
\begin{proof}
    Let $A$ denote the Radon--Nikodym derivative of $\phi$ as in \Cref{thm:radon-nikodym}. As pointwise suprema of measurable functions are measurable, it will suffice to show for all $a,b \in \M_+$ that the functions \[
        G \to [0,\infty], \, s \mapsto \alpha_s(\tau_b)(a) = \tau_a(\alpha_s(b))
    \] are measurable, as we may pick $b = A_\epsilon \in \M_+$ and $a = x_{\delta} \in \M_+$ for $\epsilon,\delta > 0$.

    By the assumption on $\alpha$, for all $b \in \M_+$ the functions $G \to \M_+, \, s \mapsto \alpha_s(b)$ are ultraweakly continuous. That means in particular, that for all $a \in \M_+$ with $\tau(a) < \infty$ the maps \[
        G \mapsto [0,\infty), \, s \mapsto \tau_a(\alpha_s(b))
    \] are continuous, and hence measurable. For more general $a \in \M_+$, as stated in \cite[p. 105]{Ne74}, we may consider an increasing net of $a_i \in \M_+$ with $\tau(a_i) < \infty$ such that $a_i$ converge to $a$ in strong operator topology. Then \[
        G \mapsto [0,\infty], \, s \mapsto \tau_a(\alpha_s(b))
    \] is a pointwise supremum of the measurable functions $s \mapsto \tau_{a_i}(\alpha_s(b))$, hence measurable.
\end{proof}

\section{Main results}

In this section we define the function $\brack{x}{y} \colon G \to \C$ for suitable operators $x$ and $y$ affiliated with $\M$ which serves as an analogue of the operator-operator convolution in Werner's quantum harmonic analysis. We then prove the main theorem from the introduction.

\subsection{Duflo--Moore for positive elements}\label{subsec:duflo-positive}

For a nonnegative measurable function $f \colon G \to [0,\infty)$ and a semifinite, normal weight $\phi$ on $\M$ the \emph{convolution} $f * \phi$ is defined to be the weight on $\M$ given by \[ 
    (f * \phi)(x) = \int_G f(s) \, \alpha_s(\phi)(x) \dif{s}, \quad x \in \M_+.
\] We stress that due to \Cref{lemma:action-weight-measurable} the integrand is a nonnegative, measurable function. By the monotone convergence theorem and the normality of $\phi$, $f * \phi$ is again a normal weight.

If $s \in G$ then we denote by $\lambda_s f \colon G \to \C$ the function given by $(\lambda_s f)(t) = f(s^{-1}t)$ for $t \in G$. By left invariance of the Haar measure we have the identity
\begin{equation}
    (\lambda_s f) * \phi = \alpha_s(f * \phi). 
    \label{eq:convolution-weight-identity}
\end{equation}

For $x,y \in \meas\M_+$ we define the function $\brack{x}{y} \colon G \to [0, \infty ]$ by \[ 
    \brack{x}{y}(s) = \alpha_s(\tau_y)(x) = \tau_{\alpha_s(y)}(x). 
\]
Since $\tau_y$ is semifinite and normal, by \Cref{lemma:action-weight-measurable} the function $\brack{x}{y}$ is measurable.

Note that for all $x,y \in \meas\M_+$ we have
\begin{equation}
    \brack{x}{y}(s^{-1}) = \brack{y}{x}(s), \quad s \in G. 
    \label{eq:bracket_symmetry}
\end{equation}
Indeed, for $x,y \in \M_+$ and $s \in G$ we consider $z = x^{1/2} \alpha_{s^{-1}}(y)^{1/2} \in \M$ and use the trace property $\tau(z^* z) = \tau(z z^*)$, which together with the invariance of the trace shows \[ 
    \brack{x}{y}(s^{-1}) 
    = \tau( x^{1/2} \alpha_{s^{-1}}(y) x^{1/2}) 
    = \tau( \alpha_s(x)^{1/2} y \alpha_s(x)^{1/2} ) 
    = \brack{y}{x}(s).
\] If $x,y \in \meas\M_+$, then taking limits of $\brack{x_\epsilon}{y_\epsilon}(s^{-1}) = \brack{y_\epsilon}{x_\epsilon}(s)$ as $\epsilon \to 0$ gives \eqref{eq:bracket_symmetry}.

We will now define admissibility of operators in the context of the group action of $G$ on $\M$. As we will see shortly, this notion coincides with the notion of admissibility with respect to an associated affiliated operator defined as in \Cref{subsec:admissibility}, cf.\ \Cref{rmk:admissibility}.

\begin{definition}\label{def:admissible-action}
    An element $y \in \M_+$ is said to be \emph{admissible} if there exists a nonzero $x \in \M_+$ such that \[ 
        \int_G \brack{x}{y} \dif{m} < \infty. 
    \]
    Furthermore, an element $y \in \M$ is \emph{admissible} if it is a linear combination of positive admissible elements.
    
    The action of $G$ on $\M$ is said to be \emph{$\tau$-integrable} if it admits a nonzero admissible element. We may always choose the nonzero admissible element to be positive.
\end{definition}

As mentioned in the introduction, we assume the ergodic action of $G$ on $\M$ to be $\tau$-integrable throughout.

\begin{theorem}\label{thm:duflo-moore}
    There exists a unique densely defined, positive, invertible operator $D$ affiliated with $\M$ such that
    \begin{equation}
        \int_G \brack{x}{y} \dif{m} = \tau(x) \tau_{D^{-1}}(y), 
        \quad x, y \in \meas\M_+. \label{eq:orthogonality-relation}
    \end{equation}
    Moreover, $D$ is semi-invariant with respect to $\Delta^{-1}$, that is, \[ 
        \alpha_s(D) = \Delta(s)^{-1} D, \quad s \in G.  
    \]
    Thus, $G$ is unimodular if and only if $D$ is a constant multiple of the identity.
\end{theorem}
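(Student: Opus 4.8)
The plan is to produce $D$ as the inverse of a Radon--Nikodym derivative, after recognizing two families of normal weights built out of the convolution $f*\phi$.

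\emph{The invariant weights $\phi_y$.} Fix $y\in M_+$ and let $\tau_y$ be the normal weight $x\mapsto\tau(y^{1/2}xy^{1/2})$ on $M$. Using invariance of $\tau$ one checks that $(\mathbbm{1}_G*\tau_y)(x)=\int_G(g\tau_y)(x)\dif g=\int_G\tau\bigl((gy)^{1/2}x(gy)^{1/2}\bigr)\dif g=\int_G\brack{x}{y}(g)\dif g$, where $\mathbbm{1}_G$ is the constant function $1$; put $\phi_y:=\mathbbm{1}_G*\tau_y$. This is a normal weight (monotone convergence), and since $g\,\mathbbm{1}_G=\mathbbm{1}_G$, the identity \eqref{eq:convolution-weight-identity} shows $\phi_y$ is $G$-invariant. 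If $y$ is admissible then $\phi_y$ takes a finite value, hence is semifinite by \Cref{lemma:invariant-ideal}; being invariant with the action ergodic, \Cref{prop:semiinvariant-weights} forces $\phi_y=\lambda(y)\,\tau$ with $\lambda(y)\in[0,\infty)$. If $y$ is not admissible then $\phi_y(x)=\infty$ for every nonzero $x$, so $\phi_y=\infty\cdot\tau$ and we set $\lambda(y)=\infty$. In all cases $\int_G\brack{x}{y}(g)\dif g=\lambda(y)\tau(x)$ for $x,y\in M_+$.

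\emph{The weight $\lambda$ and the operator $D$.} Fix $x_0\in M_+$ with $0<\tau(x_0)<\infty$ (possible by semifiniteness and faithfulness of $\tau$). Then the displayed relation, the symmetry \eqref{eq:bracket_symmetry} and the change of variables $g\mapsto g^{-1}$ identify $\lambda=\tfrac{1}{\tau(x_0)}\bigl(\Delta^{-1}*\tau_{x_0}\bigr)$ as a map $M_+\to[0,\infty]$, so $\lambda$ is a normal weight; and since $g\,\Delta^{-1}=\Delta(g)\Delta^{-1}$, \eqref{eq:convolution-weight-identity} gives $g\lambda=\Delta(g)\lambda$, i.e.\ $\lambda$ is semi-invariant with respect to $\Delta$. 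The $\tau$-integrability hypothesis supplies a nonzero admissible $y_0$, so $\lambda(y_0)<\infty$ and \Cref{lemma:invariant-ideal} makes $\lambda$ semifinite; moreover $\lambda$ is faithful, because $\lambda(y)=0$ makes $g\mapsto\brack{x}{y}(g)$ vanish $m$-a.e.\ for every $x\in M_+$ with $\tau(x)<\infty$, and this function is continuous (for such $x$, $\tau_x$ is a bounded normal functional), hence identically zero, so $\tau(x^{1/2}yx^{1/2})=0$ for all such $x$; testing against an increasing net of finite-trace projections converging to $1$ and using faithfulness of $\tau$ yields $y=0$. Now \Cref{thm:radon-nikodym} gives a unique positive affiliated $C$ with $\lambda=\tau_C$, and $C$ is invertible since $\lambda$ is faithful; set $D:=C^{-1}$, a densely defined, positive, invertible operator affiliated with $M$ with $\tau_{D^{-1}}=\lambda$. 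This proves \eqref{eq:orthogonality-relation} for $x,y\in M_+$; the general case follows by applying it to $x_\epsilon=x(1+\epsilon x)^{-1}\nearrow x$ and $y_\delta\nearrow y$, the conventions $\tau(B)=\lim_\epsilon\tau(B_\epsilon)$ and $\tau_{D^{-1}}(B)=\lim_\delta\tau_{D^{-1}}(B_\delta)$, and two monotone-convergence passages. Uniqueness is immediate: \eqref{eq:orthogonality-relation} determines $\tau_{D^{-1}}=\lambda$, hence $D^{-1}$ and hence $D$, by uniqueness in \Cref{thm:radon-nikodym}.

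\emph{Admissibility and semi-invariance.} Fixing $x\in M_+$ with $0<\tau(x)<\infty$, \eqref{eq:orthogonality-relation} shows $\int_G\brack{x}{y}\dif m<\infty$ iff $\tau_{D^{-1}}(y)<\infty$, a condition not depending on $x$. For $y\in L^1(M)_+$: if $y$ is $D^{-1}$-admissible then $\tau_{D^{-1}}(y)=\tau(D^{-1/2}yD^{-1/2})<\infty$ by \Cref{prop:admissible-trace}; conversely, if $\tau_{D^{-1}}(y)<\infty$ then $\tau_{D^{-1}}\bigl(y^{1/2}(y^{1/2})^*\bigr)<\infty$, so by the description of $\dom\bigl((D^{-1})^{1/2}\bigr)$ in \Cref{subsec:admissibility} the operator $y^{1/2}$ is $D^{-1}$-compatible and $y=y^{1/2}(y^{1/2})^*$ is $D^{-1}$-admissible. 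Finally, \Cref{prop:semiinvariant-weights} applied to $\lambda=\tau_{D^{-1}}$ (semi-invariant with respect to $\Delta$) gives $g(D^{-1})=\Delta(g)D^{-1}$, and since the action commutes with inversion of affiliated operators this is $gD=\Delta(g)^{-1}D$. If $G$ is unimodular then $gD=D$ for all $g$, so $D$ is a scalar multiple of the identity by \Cref{prop:ergodic-affiliated}; conversely $D=\lambda 1$ with $\lambda>0$ forces $\Delta\equiv1$. The main obstacle throughout is not conceptual but technical: the measurability of $g\mapsto\brack{x}{y}(g)$ (needed so that $\phi_y$, $\lambda$ and the convolutions are legitimate), the $\mathfrak{M}(M)_+$-approximation arguments, and above all making the semifiniteness of $\phi_y$ and of $\lambda$ precise --- which is exactly where the admissibility, and hence the $\tau$-integrability, hypothesis enters.
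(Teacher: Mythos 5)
Your proposal is correct and follows essentially the same route as the paper: recognize $\int_G\brack{\,\cdot\,}{y}\dif{m}$ as the invariant normal weight $\mathbf{1}*\tau_y$ and $\int_G\brack{x}{\,\cdot\,}\dif{m}$ as the $\Delta$-semi-invariant weight $\Delta^{-1}*\tau_x$, use \Cref{lemma:invariant-ideal} and \Cref{prop:semiinvariant-weights} to get $\mathbf{1}*\tau_y=\lambda(y)\tau$, and take $D^{-1}$ to be the Radon--Nikodym derivative of $\tau(x_0)^{-1}(\Delta^{-1}*\tau_{x_0})$. The one point where you argue differently is the faithfulness of this weight: the paper reruns the invariant ultraweakly closed left-ideal argument of \Cref{lemma:invariant-ideal}, whereas you use that $g\mapsto\brack{x}{y}(g)$ is continuous when $\tau(x)<\infty$ (normality of $\tau_x$ plus ultraweak continuity of the action) together with the full support of Haar measure --- both are valid, and you additionally spell out the uniqueness of $D$, the $D^{-1}$-admissibility equivalence, and both directions of the unimodularity claim, which the paper's proof leaves implicit.
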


\begin{proof}
    First let $x,y \in \meas\M_+$ be arbitrary and let $\mathbf{1}$ denote the function on $G$ constantly equal to $1$. As $\tau_y$ is a semifinite, normal weight, $\mathbf{1} * \tau_y$ defines a normal weight on $\M$. From \eqref{eq:convolution-weight-identity}, and using $\lambda_s\mathbf{1} = \mathbf{1}$ for all $s \in G$, we find that $\mathbf{1} * \tau_y$ is invariant. Using \eqref{eq:bracket_symmetry} we observe that
    \begin{align*}
        (\mathbf{1} * \tau_y)(x) 
        &= \int_G \alpha_s(\tau_y)(x) \dif{s} 
        = \int_G \tau_{\alpha_s(y)}(x) \dif{s} 
        = \int_G \brack{x}{y}(s) \dif{s} \\
        &= \int_G \brack{y}{x}(s^{-1}) \dif{s} 
        = \int_G \Delta(s)^{-1} \brack{y}{x}(s) \dif{s} 
        = (\Delta^{-1} * \tau_x)(y).
    \end{align*}

    Consider an admissible $y \in \M_+$. Then there exists some nonzero $x \in \M_+$ such that $({\mathbf{1} * \tau_y})(x) < \infty$, so that $\mathbf{1} * \tau_y$ is semifinite by \Cref{lemma:invariant-ideal}. It follows from \Cref{prop:semiinvariant-weights} that there exists $\lambda_y \geq 0$ such that
    \begin{equation}
        \mathbf{1} * \tau_y = \lambda_y \tau. 
        \label{eq:construction_of_lambda_for_admissible_elements}
    \end{equation}
    If $\lambda_y = 0$, then $\tau_{\alpha_s(y)}(x) = 0$ for all $x \in \M_+$ and almost every $s \in G$, which forces $y = 0$. Thus, any nonzero $x \in \M_+$ for which $\int_G \brack{x}{y} \dif{m} < \infty$ for some nonzero $y \in \M_+$ must satisfy $\tau(x) < \infty$. Let $x \in \M_+$ be such an element with $\tau(x) = 1$. As $\tau_x$ is a normal, semifinite weight, we may consider the normal weight $\phi = \Delta^{-1} * \tau_x$. Due to \eqref{eq:convolution-weight-identity}, $\phi$ is semi-invariant with respect to $\Delta$. By the property of $x$ there exists $y \in \M_+$ such that $\phi(y) = \int_G \brack{x}{y} \dif{s} < \infty$, and also $\phi(y) = \lambda_y \tau(x) = \lambda_y > 0$ since otherwise $y = 0$. By \Cref{lemma:invariant-ideal} (i) and (ii) it follows that $\phi$ is semifinite and faithful.
    
    Letting $D^{-1}$ denote the associated invertible Radon--Nikodym derivative of $\phi$ from \Cref{thm:radon-nikodym}, we obtain \begin{equation}
        \Delta^{-1} * \tau_x = \tau_{D^{-1}}.
        \label{eq:construction_of_D}
    \end{equation} We denote by $D$ the inverse of $D^{-1}$. By \Cref{prop:semiinvariant-weights} we find that $\alpha_s(D) = \Delta(s)^{-1}D$ holds for every $s \in G$. Now $G$ is unimodular if and only if $\Delta = \mathbf{1}$, in which case by \Cref{prop:ergodic-affiliated} $D$ is a constant multiple of the identity.

    Consider any nonzero $y \in \M_+$. If $y$ is not admissible, then \eqref{eq:construction_of_D} shows that $\tau_{D^{-1}}(y) = \infty$. If $y$ is admissible, and $x \in \M_+$ is as in the above paragraph, then we use \eqref{eq:construction_of_D} and \eqref{eq:construction_of_lambda_for_admissible_elements} to deduce that \[
        \tau_{D^{-1}}(y) 
        = (\Delta^{-1} * \tau_x)(y) 
        = (1 * \tau_y)(x) 
        = \lambda_y \tau(x)
        = \lambda_y.
    \] Hence \eqref{eq:construction_of_lambda_for_admissible_elements} states the following equality of weights \[
        \mathbf{1} * \tau_y = \tau_{D^{-1}}(y) \tau.
    \] This shows \eqref{eq:orthogonality-relation} for all $x,y \in \M_+$. By taking limits, \eqref{eq:orthogonality-relation} follows for all $x,y \in \meas\M_+$.
\end{proof}

\begin{definition}
    We call the affiliated operator $D$ from \Cref{thm:duflo-moore} the \emph{Duflo--Moore} operator associated to the action of $G$ on $\M$.
\end{definition}

\begin{remark}\label{rmk:admissibility}
    Notice that by \Cref{thm:duflo-moore} an element $y \in \M_+$ is admissible in the sense of \Cref{def:admissible-action} exactly when it is $D^{-1}$-admissible in the sense of \Cref{def:A-admissible}. More generally $y \in \M$ is $D^{-1}$-admissible if and only if it can be written as a linear combination of positive $D^{-1}$-admissible (hence admissible in the sense of \Cref{def:admissible-action}) elements, which by definition means that $y$ is admissible in the sense of \Cref{def:admissible-action}. We conclude that the notion of admissibility in the context of the group action $\alpha$ coincides with the notion of $D^{-1}$-admissibility.
\end{remark}

\subsection{Duflo--Moore for general elements}\label{subsec:general_operators}

We now prove versions of \Cref{thm:duflo-moore} for elements that are not necessarily positive. For $x \in L^1(\M)$ and $y \in \M$ we have $x \alpha_s(y^*) \in L^1(\M)$ for all $s \in G$. Hence we can make the following definition.

\begin{definition}\label{def:brack_general}
For $x \in L^1(\M)$ and $y \in \M$ we define the function $\brack{x}{y} \colon G \to \C$ by \[ 
    \brack{x}{y}(s) 
    = \tau(x \alpha_s(y^*)), \quad s \in G.
    \]
\end{definition}

\begin{remark}
Notice that this agrees with the previous definition for the case where both $x,y$ are positive. In fact, if $x \in L^1(\M)_+$ and $y \in \M_+$, then as $\epsilon > 0$ decreases towards $0$, $y_\epsilon \in \M_+$ increases to $y$.
Thus \[
    \lim_{\epsilon \to 0} \tau(x^{1/2} \alpha_s(y_\epsilon) x^{1/2})
    =  \tau(x^{1/2} \alpha_s(y) x^{1/2}) .
\] The trace property applied to $z(\epsilon) = x^{1/2} \alpha_s(y_\epsilon)^{1/2} \in L^2(\M)$ yields \[
    \tau_{\alpha_s(y)}(x)
    = \lim_{\epsilon \to 0} \tau(z(\epsilon)^* z(\epsilon))
    = \lim_{\epsilon \to 0} \tau(z(\epsilon) z(\epsilon)^*)
    = \tau(x^{1/2} \alpha_s(y) x^{1/2})
    = \tau(x \alpha_s(y)),
\]
where we in the final equality used that $x^{1/2} \in L^2(\M)$ and $\alpha_s(y) x^{1/2} \in L^2(\M)$.
\end{remark}

\begin{lemma}\label{lemma:bracket-pointwise-bound}
    The function $\brack{x}{y}$ is continuous for all $x \in L^1(\M)$ and $y \in \M$. Moreover, for all $a, b \in \M$ we have the following pointwise bound: \begin{equation} 
        |\brack{x}{a b^*}(s)| 
        \leq \brack{|x^*|}{a a^*}(s)^{1/2} \brack{|x|}{b b^*}(s)^{1/2}, \quad s \in G. \label{eq:pointwise_bound}
    \end{equation}
\end{lemma}

\begin{proof}
    By the assumptions on $\alpha$ the map $G \to \M$ given by $s \mapsto \alpha_{s}(y^*)$ is ultraweakly continuous. We apply this to the normal linear functional $\tau(x \,\cdot\,)$ on $\M$ determined by $x \in L^1(\M)$ to find that \[
        s \mapsto \tau(x \alpha_s(y^*)) = \brack{x}{y}(s)
    \] is a continuous function on $G$.
    
    Next, let $x = \xi \eta^*$ be any decomposition where $\xi, \eta \in L^2(\M)$. The trace property yields \[
        \brack{x}{a b^*}(s)
        = \tau( \xi \eta^* \alpha_s(a b^*)^* )
        = \tau( \eta^* \alpha_s(b) \alpha_s(a)^* \xi )
        = \tau( (\eta^* \alpha_s(b)) \cdot (\xi^* \alpha_s(a))^* ).
    \] Using the Cauchy--Schwarz inequality on $L^2(\M)$, we deduce that \[
        |\brack{x}{a b^*}(s)|
        \leq \| \eta^* \alpha_s(b) \|_2 \| \xi^* \alpha_s(a) \|_2.
    \] We compute \[
        \| \eta^* \alpha_s(b) \|_2^2
        = \tau(\alpha_s(b)^* \eta \eta^* \alpha_s(b))
        = \brack{\eta \eta^*}{ b b^* }(s).
    \] Similarly $
        \| \xi^* \alpha_s(a) \|_2^2
        = \brack{\xi \xi^*}{ a a^* }(s).
    $ 
    When $x = u |x|$ is the polar decomposition of $x$ with partial isometry $u \in \M$, we may choose $\xi = u |x|^{1/2}$ and $\eta = |x|^{1/2}$. The bound in \eqref{eq:pointwise_bound} then follows, as $\eta \eta^* = |x|$ and $\xi \xi^* = u |x| u^* = |x^*|$.
\end{proof}

\begin{proposition}\label{prop:bracket_L1_bound}
    For all $x \in L^1(\M)$ and admissible $y \in \M$ the continuous function $\brack{x}{y}$ is integrable. In particular, if $y = ab^*$ for $a,b \in \M$ that satisfy $\tau_{D^{-1}}(aa^*) < \infty$, $\tau_{D^{-1}}(bb^*) < \infty$, then \[
        \|\brack{x}{y}\|_{1} \leq \|x\|_{1} \tau_{D^{-1}}(aa^*)^{1/2} \tau_{D^{-1}}(bb^*)^{1/2}.
        \]
\end{proposition}

\begin{proof}
    Suppose $y = ab^*$ for $a,b \in \M$ that satisfy $\tau_{D^{-1}}(aa^*) < \infty$, $\tau_{D^{-1}}(bb^*) < \infty$. Using \cref{lemma:bracket-pointwise-bound}, the Cauchy--Schwarz inequality in $L^2(G)$ and \cref{thm:duflo-moore}, we obtain \begin{align*}
        \int_G | \brack{x}{a b^*}(s) | \dif{s} &\leq \int_G \brack{|x^*|}{aa^*}(s)^{1/2} \brack{|x|}{bb^*}(s)^{1/2} \dif{s} \\
        &\leq \Big( \int_G \brack{|x^*|}{aa^*}(s) \dif s \Big)^{1/2} \Big( \int_G \brack{|x|}{bb^*}(s) \dif s \Big)^{1/2} \\
        &= \tau(|x^*|)^{1/2} \tau_{D^{-1}}(aa^*)^{1/2} \tau(|x|)^{1/2} \tau_{D^{-1}}(bb^*)^{1/2} \\
        &= \|x\|_1 \tau_{D^{-1}}(aa^*)^{1/2} \tau_{D^{-1}}(bb^*)^{1/2} .
    \end{align*}
    This shows that $\brack{x}{a b^*}$ is integrable for all $a,b \in \M$ with $\tau_{D^{-1}}(a a^*) < \infty$ and $\tau_{D^{-1}}(b b^*) < \infty$.

    More generally, all admissible elements $y \in \M$ are of the form $y = \sum_{i = 1}^n a_i b_i^*$ with $a_i, b_i \in \M$ that satisfy $\tau_{D^{-1}}(a_i a_i^*) < \infty$ and $\tau_{D^{-1}}(b_i b_i^*) < \infty$. This shows that $\brack{x}{y}$ is a finite sum of integrable functions, hence integrable.
\end{proof}

\begin{lemma}\label{prop:cauchy_l1}
    Let $x \in L^1(\M)$ and $y \in \mathfrak m_\tau$. For every $\epsilon > 0$ 
    the continuous function $\brack{x}{D_\epsilon^{1/2} y D_\epsilon^{1/2}}$ is in $L^1(G)$, with 
    \begin{equation}
        \|\brack{x}{D_\epsilon^{1/2} y D_\epsilon^{1/2}}\|_{1} \leq \|x\|_{1} \|y\|_{1}. \label{eq:l1-norm-bound}
    \end{equation}
    Moreover, the net $( D^{1/2}_\epsilon y D_\epsilon^{1/2} )_{\epsilon > 0}$ converges in $L^1(G)$ as $\epsilon \to 0$.
\end{lemma}
\begin{proof}
    Consider the polar decomposition $y = u |y|$ of $y$. Set $a_\epsilon = D_\epsilon^{1/2} u |y|^{1/2} \in \mathfrak n_\tau$ and $b_\epsilon = D_\epsilon^{1/2} |y|^{1/2} \in \mathfrak n_\tau$ for every $\epsilon > 0$. Then $D_\epsilon^{1/2} y D_\epsilon^{1/2} = a_\epsilon b_\epsilon^*$.  As $D^{-1}$ and $D_\epsilon$ commute and $D^{-1} D_\epsilon = (1 + \epsilon D)^{-1} \in \M_+$, \cite[Proposition 4.3]{PeTa73} gives that $(\tau_{D^{-1}})_{D_\epsilon} = \tau_{(1 + \epsilon D)^{-1}} \leq \tau$ for all $\epsilon > 0$. Therefore \[
        \tau_{D^{-1}}(a_\epsilon a_\epsilon^*)
        = \tau_{D^{-1}}(D_\epsilon^{1/2} u |y|^{1/2} \, |y|^{1/2} u^* D_\epsilon^{1/2})
        = (\tau_{D^{-1}})_{D_\epsilon}(u |y| u^*)
        \leq \tau(u |y| u^*) = \tau(|y|) < \infty.
    \] Similarly $\tau_{D^{-1}}(b_\epsilon b_\epsilon^*) = \tau_{(1 + \epsilon D)^{-1}}(|y|) \leq \tau(|y|) < \infty$. Thus \Cref{prop:bracket_L1_bound} applies to give that $\brack{x}{D_\epsilon^{1/2} y D_\epsilon^{1/2}}$ is integrable, with
    \[
         \| \brack{x}{D_\epsilon^{1/2} y D_\epsilon^{1/2}}(s) \|_1 \leq \| x \|_1 \tau_{D^{-1}}(a_\epsilon a_\epsilon^*)^{1/2} \tau_{D^{-1}}(b_\epsilon b_\epsilon^*)^{1/2} \leq \| x \|_1 \| y \|_1 .
    \]
    Next, we show that $(\brack{x}{D_\epsilon^{1/2} y D_\epsilon^{1/2}})_{\epsilon > 0}$ forms a Cauchy net in $L^1(G)$ as $\epsilon \to 0$. Note first that \[
        \brack{x}{D_\epsilon^{1/2} y D_\epsilon^{1/2}} - \brack{x}{D_\delta^{1/2} y D_\delta^{1/2}}
        = \brack{x}{a_\epsilon (b_\epsilon - b_\delta)^*} + \brack{x}{(a_\epsilon - a_\delta) b_\delta^*}.
    \] Using the triangle inequality and \cref{prop:bracket_L1_bound} we find \[\begin{aligned}
        \|\brack{x}{D_\epsilon^{1/2} y D_\epsilon^{1/2}} - \brack{x}{D_\delta^{1/2} y D_\delta^{1/2}}\|_1
        &\leq \|x\|_1 \tau_{D^{-1}}(a_\epsilon a_\epsilon^*)^{1/2} \tau_{D^{-1}}((b_\epsilon - b_\delta) (b_\epsilon - b_\delta)^*)^{1/2} \\
        &\qquad + \|x\|_1 \tau_{D^{-1}}((a_\epsilon - a_\delta) (a_\epsilon - a_\delta)^*)^{1/2}  \tau_{D^{-1}}(b_\delta b_\delta^*)^{1/2}.
    \end{aligned}\]
    Thus, it suffices to show the following: \[
        \tau_{D^{-1}}((a_\epsilon - a_\delta) (a_\epsilon - a_\delta)^*) \to 0, 
        \quad \tau_{D^{-1}}((b_\epsilon - b_\delta) (b_\epsilon - b_\delta)^*) \to 0 
        \qquad \text{as } \epsilon, \delta \to 0.
    \]
    
    For this, we denote $d_\epsilon = (1 + \epsilon D)^{-1} \in \M_+$ for every $\epsilon > 0$. Then $D^{-1/2} D_\epsilon^{1/2} 
    = d_\epsilon^{1/2} \in \M_+$. Furthermore, from the inequality $r^{1/2} q^{1/2} \geq 2 (r^{-1} + q^{-1})^{-1}$ for real numbers $r,q > 0$ the functional calculus gives \[
        d_\epsilon^{1/2} d_\delta^{1/2}
        \geq 2 (d_\epsilon^{-1} + d_\delta^{-1})^{-1}
        = 2( 1 + \epsilon D + 1 + \delta D )^{-1}
        \geq (1 + \max\{\epsilon, \delta\} D)^{-1}
        = d_{\max\{\epsilon, \delta\}}.
    \]
    The nonnegative continuous functions $f_\epsilon(r) = (1 + \epsilon r)^{-1}$ increase pointwise and converge pointwise as $\epsilon \to 0$ to the constant $1$ function. It follows from \cref{lem:increasing_lemma} that for all $z \in \M_+$ with $\tau(z) < \infty$,\[\begin{aligned}
        0 \leq \tau( (d_\epsilon^{1/2} - d_\delta^{1/2}) z (d_\epsilon^{1/2} - d_\delta^{1/2}) )
        &= \tau_{d_\epsilon}(z) + \tau_{d_\delta}(z) - 2\tau_{d_\epsilon^{1/2} d_\delta^{1/2}}(z) \\
        &\leq \tau(z) + \tau(z) - 2\tau_{d_{\max\{\epsilon, \delta\}}}(z) \\
        &= 2( \tau(z) - \tau_{f_{\max\{\epsilon, \delta\}}(D)}(z) )
        \to 0 \quad \text{ as } \epsilon, \delta \to 0.
    \end{aligned}\]
    We apply this for $z = u |y| u^*$ and $z = |y|$ to finish the proof: \[
        \tau_{D^{-1}}( (a_\epsilon - a_\delta) (a_\epsilon - a_\delta)^* )
        = \tau( (d_\epsilon^{1/2} - d_\delta^{1/2}) u |y| u^* (d_\epsilon^{1/2} - d_\delta^{1/2}) )
        \to 0 \quad \text{ as } \epsilon, \delta \to 0,
    \] \[
        \tau_{D^{-1}}( (b_\epsilon - b_\delta) (b_\epsilon - b_\delta)^* )
        = \tau( (d_\epsilon^{1/2} - d_\delta^{1/2}) |y| (d_\epsilon^{1/2} - d_\delta^{1/2}) )
        \to 0 \quad \text{ as } \epsilon, \delta \to 0.
    \]
\end{proof}

Due to the previous lemma we may now make the following definition.

\begin{definition}\label{def:brack_D_left}
For $x \in L^1(\M)$ and $y \in \mathfrak m_\tau$ we define \[
    \brack{x}{D^{1/2} y D^{1/2}} = \lim_{\epsilon \to 0} \brack{x}{D_\epsilon^{1/2} y D_\epsilon^{1/2}}
    \]
where the limit is taken in $L^1(G)$ as in \Cref{prop:cauchy_l1}.
\end{definition}

It is immediate that $\brack{x}{D^{1/2} y D^{1/2}}$ is sesquilinear in $x \in L^1(\M)$ and $y \in \mathfrak{m}_\tau$, and that \begin{equation}
    \|\brack{x}{D^{1/2} y D^{1/2}}\|_{L^1(G)} \leq \|x\|_{L^1(\M)} \|y\|_{L^1(\M)}. \label{eq:inequality_left_D}
\end{equation}
If $y \in L^1(\M)$, one may further chose a sequence $y_n \in \mathfrak m_\tau$ with $y_n \to y$ in $L^1(\M)$, and observe that $(\brack{x}{D^{1/2} y_n D^{1/2}})_{n \in \N}$ forms a Cauchy-sequence in $L^1(G)$ which is independent of the chosen approximation. In this case we define
\begin{equation}
    \brack{x}{D^{1/2} y D^{1/2}} = \lim_{n \to \infty} \brack{x}{D^{1/2}y_nD^{1/2}} .
\end{equation}

\begin{proposition}\label{prop:duflo-general-left}
    Let $x,y \in L^1(\M)$. Then
    \begin{equation}
        \| \brack{x}{D^{1/2}yD^{1/2} } \|_1 \leq \| x \|_1 \| y \|_1  ,
    \end{equation}
    and
    \begin{equation}
        \int_G \brack{x}{D^{1/2} y D^{1/2}}(s) \dif s = \tau(x) \overline{\tau(y)}. \label{eq:integral_formula}
    \end{equation}
\end{proposition}

\begin{proof}
    Picking a sequence $(y_n)_{n \in \N}$ in $\mathfrak{m}_\tau$ such that $y_n \to y$ in $L^1(\M)$, it follows immediately from \eqref{eq:inequality_left_D} that
    \[ \| \brack{x}{D^{1/2} y D^{1/2}} \|_1 = \lim_{n \to \infty} \| \brack{x}{D^{1/2}y_nD^{1/2}} \|_1 \leq \lim_{n \to \infty} \| x \|_1 \| y_n \|_1 = \| x \|_1 \| y \|_1.  \]
    For \eqref{eq:integral_formula}, we first assume that both $x,y \in \mathfrak m_\tau$ are positive. Then from \cref{thm:duflo-moore} it follows that
    \begin{align*}
        \int_G \brack{x}{D^{1/2}yD^{1/2}}(s) \dif{s} &= \lim_{\epsilon \to 0} \int_G \brack{x}{D_\epsilon^{1/2}y D_\epsilon^{1/2}}(s) \dif{s} \\
        &= \lim_{\epsilon \to 0} \tau(x) \tau_{D^{-1}}(D_\epsilon^{1/2} y D_\epsilon^{1/2}) \\
        &= \tau(x) \tau(y) .
    \end{align*}
    More generally, we make use of the fact that all admissible $y \in \M$ can be expressed as a linear combination of four positive, admissible elements. Write $y = \sum_{k = 1}^4 \lambda_k y_k$ with $\lambda_k \in \mathbb C$ and $y_k \in \M_+$ with $\tau_{D^{-1}}(y_k) < \infty$ for $k = 1, \dots, 4$. Similarly, we may write $x = \sum_{j = 1}^4 \mu_j x_j$ with $\mu_j \in \mathbb C$ and $x_j \in \M_+$, $\tau(x_j) < \infty$ for all $j = 1,\dots, 4$. Using the sesquilinearity of $\brack{\cdot}{D^{1/2}\cdot D^{1/2}}$ and what we have already established for positive elements, we find that
    \begin{align*}
        \int_G \brack{x}{D^{1/2}yD^{1/2}} \dif m
        = \int_G \sum_{j,k = 1}^4 \mu_j \overline{\lambda_k} \, \brack{x_j}{D^{1/2}y_kD^{1/2}} \dif m
        &= \sum_{j,k = 1}^4 \mu_j \overline{\lambda_k} \tau(x_j) \tau(y_k) \\
        &= \tau(x) \overline{\tau(y)}.
    \end{align*}
    If $x \in L^1(\M)$ and $y \in \mathfrak{m}_\tau$, then we may find a sequence $(x_n)_n$ in $\mathfrak{m}_\tau$ such that $x_n \to x$ in $L^1(\M)$. In this case $\brack{x}{D^{1/2}yD^{1/2}}$ is the limit of the sequence $(\brack{x_n}{D^{1/2}yD^{1/2}})_n$ in $L^1(G)$, so \eqref{eq:integral_formula} follows from what we have already established. Finally, if $y \in L^1(\M)$, another similar approximation yields \eqref{eq:integral_formula} in this case as well.
\end{proof}

We now establish versions of \Cref{prop:duflo-general-left} with the Duflo--Moore operator on the right side of the equality. This will require $y \in \M$ to be admissible, and eventually allow it to be an element of the Banach space $L^1(\M,D^{-1})$ from \Cref{subsec:admissibility}. The following lemma ensures that the \Cref{def:brack_general} and \Cref{def:brack_D_left} do not conflict.

\begin{lemma}
    Let $x \in L^1(\M)$ and let $y \in \M$ be admissible. Then
    \[ \brack{x}{D^{1/2}(D^{-1/2}yD^{-1/2})D^{1/2}} = \brack{x}{y} \]
    as elements of $L^1(G)$.
\end{lemma}

\begin{proof}
    Since $(D^{-1})_\delta^{1/2} y (D^{-1})_\delta^{1/2} \in \mathfrak m_\tau$ converge to $D^{-1/2} y D^{-1/2}$ in $L^1(\M)$ as $\delta \to 0$, we have by definition \[\begin{aligned}
        \brack{x}{D^{1/2}(D^{-1/2}yD^{-1/2})D^{1/2}}
        &= \lim_{\delta \to 0}
        \brack{x}{D^{1/2} ((D^{-1})_\delta^{1/2} y (D^{-1})_\delta^{1/2}) D^{1/2}} \\
        &= \lim_{\delta \to 0} \lim_{\epsilon \to 0} \brack{x}{D_\epsilon^{1/2} (D^{-1})_\delta^{1/2} y (D^{-1})_\delta^{1/2} D_\epsilon^{1/2}}.
    \end{aligned}\] in $L^1(G)$. Therefore, denoting for $\epsilon, \delta > 0$ the bounded elements
    \[
        d_{\epsilon,\delta} 
        = D_\epsilon^{1/2} (D^{-1})_\delta^{1/2}
        = (1+ \epsilon D)^{-1/2} (1+ \delta D^{-1})^{-1/2} \in \M_+,
    \]
    we will have to show that \[
        \|\brack{x}{y} - \lim_{\delta \to 0} \lim_{\epsilon \to 0} \brack{x}{d_{\epsilon,\delta} y d_{\epsilon,\delta}}\|_{L^1(G)}
        = \lim_{\delta \to 0} \lim_{\epsilon \to 0} \|\brack{x}{y - d_{\epsilon,\delta} y d_{\epsilon,\delta}}\|_{L^1(G)}
        = 0.
    \]

    First, we prove the claim for $y = a b^*$ where $a, b \in \M$ satisfy $\tau_{D^{-1}}(a a^*) < \infty, \tau_{D^{-1}}(b b^*) < \infty$. We write \[
        a b^* - d_{\epsilon,\delta} a b^* d_{\epsilon,\delta}
        = (a - d_{\epsilon,\delta} a) b^* + (d_{\epsilon,\delta} a) (b - d_{\epsilon,\delta} b)^*.
    \]
    So \[
        \|\brack{x}{a b^* - d_{\epsilon,\delta} a b^* d_{\epsilon,\delta}}\|_1
        \leq \|\brack{x}{(a - d_{\epsilon,\delta} a) b^*}\|_1 + \|\brack{x}{(d_{\epsilon,\delta} a) (b - d_{\epsilon,\delta} b)^*}\|_1.
    \] Both summands are of a form were we can use the bound of \cref{prop:bracket_L1_bound}: \[
        \|\brack{x}{(a - d_{\epsilon,\delta} a) b^*}\|_1 \leq 
        \|x\|_{1} \tau_{D^{-1}}((1 - d_{\epsilon,\delta}) a a^* (1 - d_{\epsilon,\delta}))^{1/2} \tau_{D^{-1}}(bb^*)^{1/2},
    \]\[
        \|\brack{x}{ (d_{\epsilon,\delta} a) (b - b^* d_{\epsilon,\delta})^*}\|_1
        \leq \|x\|_1 \tau_{D^{-1}}(d_{\epsilon,\delta} a a^* d_{\epsilon,\delta})^{1/2} \tau_{D^{-1}}((1 - d_{\epsilon,\delta}) b b^* (1 - d_{\epsilon,\delta}))^{1/2}.
    \] We note immediately that $d_{\epsilon, \delta}^2 = (1 + \epsilon D)^{-1} (1+ \delta D^{-1})^{-1} \leq 1$, so that $\tau_{D^{-1}}(d_{\epsilon,\delta} a a^* d_{\epsilon,\delta}) \leq \tau_{D^{-1}}(a a^*)$ is bounded for all $\epsilon, \delta > 0$. So we have to show for $z \in \M_+$ with $\tau_{D^{-1}}(z) < \infty$ that \[
        \lim_{\delta \to 0} \lim_{\epsilon \to 0} (\tau_{D^{-1}})_{(1 - d_{\epsilon,\delta})^2}(z) = 0.
    \] 
    We define by $f_\epsilon(r) = (1 + \epsilon r)^{-1}$ a nonnegative, continuous function on $\mathbb R_+$ that as $\epsilon \to 0$ pointwise increases to the constant function $1$. Note that $d_{\epsilon, \delta} = f_\epsilon^{1/2}(D) f_\delta^{1/2}(D^{-1})$. We combine \cref{lem:increasing_lemma} and \cite[Proposition 4.2]{PeTa73} to compute \[
        \lim_{\delta \to 0} \lim_{\epsilon \to 0} (\tau_{D^{-1}})_{d_{\epsilon,\delta}^2}(z)
        = \lim_{\delta \to 0} \lim_{\epsilon \to 0} \big((\tau_{D^{-1}})_{f_\delta(D^{-1})}\big)_{f_\epsilon(D)}(z)
        = \lim_{\delta \to 0} (\tau_{D^{-1}})_{f_\delta(D^{-1})}(z)
        = \tau_{D^{-1}}(z),
    \]\[
        \lim_{\delta \to 0} \lim_{\epsilon \to 0} (\tau_{D^{-1}})_{d_{\epsilon,\delta}}(z)
        = \lim_{\delta \to 0} \lim_{\epsilon \to 0} \big((\tau_{D^{-1}})_{f_\delta^{1/2}(D^{-1})}\big)_{f_\epsilon^{1/2}(D)}(z)
        = \lim_{\delta \to 0} (\tau_{D^{-1}})_{f_\delta^{1/2}(D^{-1})}(z)
        = \tau_{D^{-1}}(z).
    \]
    As $
        (1 - d_{\epsilon,\delta})^2
        = 1 - 2 d_{\epsilon,\delta} + d_{\epsilon,\delta}^2,
    $ we conclude that \[
        \lim_{\delta \to 0} \lim_{\epsilon \to 0} \, (\tau_{D^{-1}})_{(1 - d_{\epsilon,\delta})^2}(z)
        = \tau_{D^{-1}}(z) - 2 \tau_{D^{-1}}(z) + \tau_{D^{-1}}(z) = 0.
    \]

    Now let $y \in \M$ be a general admissible element. We may write $y = \sum_{i = 1}^n a_i b_i^*$ where $a_i, b_i \in \M$ satisfy $\tau_{D^{-1}}(a_i a_i^*) < \infty$ and $\tau_{D^{-1}}(b_i b_i^*) < \infty$. By the triangle inequality we find \[
        \lim_{\delta \to 0} \lim_{\epsilon \to 0} \|\brack{x}{y - d_{\epsilon,\delta} y d_{\epsilon,\delta}}\|_{L^1(G)}
        \leq \sum_{i = 1}^n \lim_{\delta \to 0} \lim_{\epsilon \to 0} \|\brack{x}{a_i b_i^* - d_{\epsilon,\delta} a_i b_i^* d_{\epsilon,\delta}}\|_{L^1(G)} = 0.
    \] 
\end{proof}

\begin{definition}
Let $x \in L^1(\M)$ and let $y \in L^1(\M,D^{-1})$. We then define
\[ \brack{x}{y} = \lim_{n \to \infty} \brack{x}{D^{1/2} (D^{-1/2} y_n D^{-1/2}) D^{1/2}} , \]
where $(y_n)_{n \in \N}$ is any sequence of admissible elements in $\M$ such that $y_n \to y$ in $L^1(\M,D^{-1})$.
\end{definition}

The following corollary is now an immediate consequence of \Cref{prop:duflo-general-left}.

\begin{corollary}\label{cor:first_main_theorem}
Let $x \in L^1(\M)$ and $y \in L^1(\M,D^{-1})$. Then
\[ \| \brack{x}{y} \|_1 \leq \| x \|_1 \| D^{-1/2} y D^{-1/2} \|_1 \]
and
\[ \int_G \brack{x}{y}(s) \dif{s} = \tau(x) \overline{\tau(D^{-1/2} y D^{-1/2})}. \]
\end{corollary}

\subsection{Young's convolution inequality}

Throughout this subsection, we let $p,q,r \geq 1$ be real numbers such that
\begin{equation}
    \frac{1}{p} + \frac{1}{q} = 1 + \frac{1}{r} . \label{eq:youngs-pqr}
\end{equation}
We also denote by $p' \geq 1$ the Hölder conjugate of $p$, so that $1/p + 1/p' = 1$. We similarly denote by $q'$ the Hölder conjugate of $q$.

We recall the Araki--Lieb--Thirring inequality, cf.\ \cite{Ko92}, which states that
    \[
        \tau( (a^{1/2} b a^{1/2})^{r} ) \leq \tau( a^{r/2} b^r a^{r/2} ) , \quad a, b \in L^0(\M)_+,
    \]
provided $r \geq 1$. We use it to improve \cref{lemma:bracket-pointwise-bound} as follows:

\begin{lemma}\label{lem:pointwise_bound_pqr}
    Let $x \in L^p(\M)$ and $a, b, z \in \M$ with $\|z\|_{p'} < \infty$. 
    Then we have the following pointwise bound: \[
        |\brack{x}{a z b^*}(s)|
        \leq 
        \|x\|_{p}^{\frac p{q'}}
        \|z\|_{p'}
        \, \brack{|x^*|^p}{(a a^*)^r}(s)^{\frac1{2r}} 
        \, \brack{|x|^p}{(b b^*)^r}(s)^{\frac1{2r}},
        \quad s \in G.
    \]
\end{lemma}
\begin{proof}
    Let $x = u |x|$ be the polar decomposition with partial isometry $u \in \M$. For all $\alpha \in [0,1]$ we have $x = |x^*|^{\alpha} u |x|^{1 - \alpha}$, so as $\tau$ is a trace and $1/q' + 1/r = 1/p$ we find that \[
        \brack{x}{a z b^*}(s)
        = \tau(x \alpha_s(a z b^*)^*) = \tau(u |x|^{\frac p{q'} + \frac p{2r}} \, \alpha_s(a z b^*)^* \, |x^*|^{\frac p{2r}}), \quad s \in G.
    \] We write the expression inside $\tau$ as a product of four terms: \[
        u |x|^{\frac p{q'}} \cdot \big( |x|^{\frac p{2r}} \, \alpha_s(b) \big) \cdot \alpha_s(z)^* \cdot \big( |x^*|^{\frac p{2r}} \, \alpha_s(a) \big)^*.
    \]
    Since $1/q' + 1/(2r) + 1/p' + 1/(2r) = 1$, we can use Hölder's inequality to obtain \[
        |\brack{x}{a z b^*}(s)|
        \leq \| u |x|^{\frac p{q'}} \|_{q'} \cdot \| |x|^{\frac p{2r}} \alpha_s(b) \|_{2r} \cdot \| \alpha_s(z)^* \|_{p'} \cdot \| |x^*|^{\frac p{2r}} \, \alpha_s(a) \|_{2r}, \quad s \in G.
    \]
    Here $\| u |x|^{\frac p{q'}} \|_{q'} = \|x\|_{p}^{\frac p{q'}}$ and $\| \alpha_s(z)^* \|_{p'} = \| z \|_{p'}$ for all $s \in G$. With the Araki--Lieb--Thirring inequality we obtain \[\begin{aligned}
        \| |x|^{\frac p{2r}} \, \alpha_s(b) \|_{2r}^{2r}
        = \tau( \big( |x|^{\frac p{2r}} \, \alpha_s(b b^*) \, |x|^{\frac p{2r}} \big)^r )
        &\leq \tau( |x|^{\frac p2} \, \alpha_s(b b^*)^r \, |x|^{\frac p2} )
        = \brack{|x|^p}{ (b b^*)^r }(s).
    \end{aligned}\] Similarly, $\| |x^*|^{\frac p{2r}} \, \alpha_s(a) \|_{2r}^{2r} \leq \brack{|x^*|^p}{ (a a^*)^r }(s)$. This shows the claim.
\end{proof}

Note that for $p = q = r = 1$ we may choose $z = 1 \in \M = L^\infty(\M)$ to recover \cref{lemma:bracket-pointwise-bound}.

\begin{corollary}
    With $x$, $z$, $a$ and $b$ as in \Cref{lem:pointwise_bound_pqr}, the continuous function $\brack{x}{a z b^*}$ is an element of $L^r(G)$ whenever $\tau_{D^{-1}}( (a a^*)^r ) < \infty$ and $\tau_{D^{-1}}( (b b^*)^r ) < \infty$. Furthermore, \[
        \| \brack{x}{a z b^*} \|_{r} \leq \|x\|_{p} \|z\|_{p'} \tau_{D^{-1}}( (a a^*)^r )^{\frac 1{2r}} \tau_{D^{-1}}( (b b^*)^r )^{\frac 1{2r}}.
    \]
\end{corollary}

\begin{proof}
    This follows from the Cauchy-Schwarz inequality and \cref{thm:duflo-moore}: \[
        \int_G \brack{|x^*|^p}{(a a^*)^r} \dif m
        = \tau(|x^*|^p) \tau_{D^{-1}}( (a a^*)^r ), \;
        \int_G \brack{|x|^p}{(b b^*)^r} \dif m
        = \tau(|x|^p) \tau_{D^{-1}}( (b b^*)^r )
    \] combine to \[
        \| \brack{|x^*|^p}{(a a^*)^r}(s)^{\frac1{2}} 
        \, \brack{|x|^p}{(b b^*)^r}(s)^{\frac1{2}} \|_{L^1(G)}
        \leq \|x\|_{L^p(\M)}^p \tau_{D^{-1}}( (a a^*)^r )^{\frac1{2}} \tau_{D^{-1}}( (b b^*)^r )^{\frac1{2}}.
    \] So, using the above pointwise bound, \[\begin{aligned}
        (\int_G |\brack{x}{a z b^*}(s)|^r \dif s)^{1/r}
        &\leq 
        \|x\|_{L^p(\M)}^{\frac p{q'}}
        \|z\|_{L^{p'}(\M)}
        \| \brack{|x^*|^p}{(a a^*)^r}(s)^{\frac1{2}} 
        \, \brack{|x|^p}{(b b^*)^r}(s)^{\frac1{2}} \|_{L^1(G)}^{\frac1r} \\
        &\leq \|x\|_{L^p(\M)}^{\frac p{q'}}
        \|z\|_{L^{p'}(\M)} \|x\|_{L^p(\M)}^{\frac pr} \tau_{D^{-1}}( (a a^*)^r )^{\frac1{2r}} \tau_{D^{-1}}( (b b^*)^r )^{\frac1{2r}}.
    \end{aligned}\] As $1/q' + 1/r = 1/p$, the claim follows.
\end{proof}

\begin{proposition}\label{thm:young}
    Let $x \in L^p(\M)$, and let $y \in \mathfrak m_{\tau}$ be such that $y$ and $D$ commute. Then the net $(\brack{x}{D_\epsilon^{1/r} y})_{\epsilon > 0}$ converges in $L^r(G)$ as $\epsilon \to 0$, and \[
        \| \brack{x}{D_\epsilon^{1/r} y} \|_{r} \leq \|x\|_{p} \|y\|_{q}. 
    \]
\end{proposition}

\begin{proof}
    Fix $\epsilon > 0$. Let $y = v|y|$ be the polar decomposition of $y$. Write $D_\epsilon^{\frac 1{r}} y = D_\epsilon^{\frac 1{2r}} y D_\epsilon^{\frac 1{2r}} = a z b^*$ where $a,b,z \in \M$ are defined as follows \[
              a = D_\epsilon^{\frac 1{2r}} |y^*|^{\frac q{2r}}, 
        \quad z = |y^*|^{\frac q{2p'}} v |y|^{\frac q{2p'}}, 
        \quad b = D_\epsilon^{\frac 1{2r}} |y|^{\frac q{2r}}.
    \]
    By Hölder's inequality $\|z\|_{L^{p'}(\M)} \leq \|y\|_{L^q(\M)}^{\frac q{p'}} < \infty$. As $y$ commutes with $D$, so do $|y|,|y^*|$ and $D_\epsilon$, and hence \[
        (a a^*)^r
        = (D_\epsilon^{\frac 1{2r}} |y^*|^{\frac qr} D_\epsilon^{\frac 1{2r}})^r
        = D_\epsilon^{1/2} |y^*|^{q} D_\epsilon^{1/2}.
    \] Therefore \[
        \tau_{D^{-1}}((a a^*)^r) = \tau_{(1 + \epsilon D)^{-1}}(|y^*|^q) \leq \tau(|y^*|^q) = \|y\|_{L^q(\M)}^q < \infty.
    \] Similarly, $\tau_{D^{-1}}( (b b^*)^r ) \leq \|y\|_{L^q(\M)}^q$. This shows that $\brack{x}{D_\epsilon^{1/r} y} \in L^r(\M)$ with \[
        \|\brack{x}{D_\epsilon^{1/r} y}\|_{L^r(\M)}
        \leq \|x\|_{L^p(\M)} \|y\|_{L^q(\M)}^{\frac q{p'}} \|y\|_{L^q(\M)}^{\frac q{2r}} \|y\|_{L^q(\M)}^{\frac q{2r}}
        = \|x\|_{L^p(\M)} \|y\|_{L^q(\M)}.
    \]

    To show that $\brack{x}{D_\epsilon^{1/r} y}$ are Cauchy in $L^r(G)$, consider $\epsilon, \epsilon' > 0$ and calculate \[
        D_\epsilon^{1/r} y - D_{\epsilon'}^{1/r} y = d_{\epsilon, \epsilon'} \, y \, d_{\epsilon, \epsilon'},
        \quad \text{where } d_{\epsilon, \epsilon'} = (D_{\min\{\epsilon, \epsilon'\}}^{1/r} - D_{\max\{\epsilon, \epsilon'\}}^{1/r})^{1/2} \in \M_+.
    \] So we find that \[
        \brack{x}{D_\epsilon^{1/r} y} - \brack{x}{D_\epsilon^{1/r} y}
        = \brack{x}{(d_{\epsilon, \epsilon'} |y^*|^{\frac q{2r}}) \, z \, (d_{\epsilon, \epsilon'} |y|^{\frac q{2r}})^* }.
    \] We compute \[
        \tau_{D^{-1}}( (d_{\epsilon, \epsilon'} |y^*|^{\frac q{2r}})^{2r} )
        = \tau_{D^{-1}}( d_{\epsilon, \epsilon'}^{2r} |y^*|^q )
        = \tau_{((1 + \min\{\epsilon, \epsilon'\} D)^{-1/r} - (1 + \max\{\epsilon, \epsilon'\} D)^{-1/r})^r}(|y^*|^q).
    \] 
    We observe that $f_\epsilon(t) = (1 - (1 + \epsilon t)^{-1/r})^r$ defines non-negative, continuous functions on $\mathbb R_+$ that as $\epsilon \to 0$ pointwise decrease to $0$. We use that $(1 + \min\{\epsilon, \epsilon'\} D)^{-1/r} \leq 1 \in \M_+$ and \cref{lem:increasing_lemma} to deduce \[
        \tau_{D^{-1}}( (d_{\epsilon, \epsilon'} |y^*|^{\frac q{2r}})^{2r} )
        \leq \tau_{(1 - (1 + \max\{\epsilon, \epsilon'\} D)^{-1/r})^r}(|y^*|^q)
        \to 0 \quad \text{ as } \epsilon, \epsilon' \to 0.
    \]
    This shows that \[
        \|\brack{x}{D_\epsilon^{1/r} y} - \brack{x}{D_{\epsilon'}^{1/r} y}\|_r
        \leq \|x\|_p \|z\|_{p'} \tau_{D^{-1}}( (d_{\epsilon, \epsilon'} |y^*|^{\frac q{2r}})^{2r} )^{\frac 1{2r}} \tau_{D^{-1}}( (d_{\epsilon, \epsilon'} |y|^{\frac q{2r}})^{2r} )^{\frac 1{2r}}
        \to 0
    \] as $\epsilon, \epsilon' \to 0$.
\end{proof}

In light of \Cref{thm:young}, just like in \Cref{subsec:general_operators}, we may now define
\[ \brack{x}{D^{1/r}y} = \lim_{\epsilon \to 0} \brack{x}{D_\epsilon^{1/r}y} \]
where the limit is taken in $L^r(G)$. Afterwards, we extend the definition of $\brack{x}{D^{1/r}y}$ to the case where $x \in L^1(\M)$ and $y \in L^q(\M)$ by approximating $y$ with a sequence $(y_n)_{n \in \N}$ of elements in $\M \cap L^q(\M)$ that converges to $y$ in the $L^q(\M)$-norm. The following corollary follows immediately, establishing \cref{thm:intro2}.

\begin{corollary}\label{cor:second_main_theorem}
    If $x \in L^p(\M)$, and $y \in L^q(\M)$ commutes with $D$, then $\brack{x}{D^{1/r}y} \in L^r(\M)$, with \[
        \| \brack{x}{D^{1/r}y} \|_r \leq \| x \|_p \| y \|_q.
    \] 
\end{corollary}

Using interpolation, we can prove the boundedness on $L^p(\M)$ of the operator given by taking the bracket product with $y$ under certain assumptions. 

\begin{proposition}
    Let $y \in L^1(\M)$ be admissible and let $1 \leq p,q \leq \infty$ satisfy $1/p + 1/q = 1$. Then \[ 
        \| \brack{x}{y} \|_p \leq \| x \|_p \| y \|_1^{1/q} \| D^{-1/2} y D^{-1/2} \|_1^{1/p}, \quad x \in L^p(\M). 
    \]
\end{proposition}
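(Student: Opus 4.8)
The plan is to obtain the estimate by complex interpolation of the linear map $T_y\colon x\mapsto\brack{x}{y}$ (with the admissible element $y\in L^1(M)$ held fixed) between its behaviour at $p=1$ and at $p=\infty$. Since $M\cap L^1(M)$ is dense in $L^p(M)$ for every $1\le p<\infty$, it suffices to establish the bound for $x\in M\cap L^1(M)$ and then extend by continuity (which is also what defines $\brack{x}{y}$ for $x\in L^p(M)$ in the first place). The endpoint at $p=1$ is already available: \Cref{cor:duflo-admissible} gives $\|\brack{x}{y}\|_1\le\|x\|_1\|D^{-1/2}yD^{-1/2}\|_1$ for all $x\in L^1(M)$.

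For the endpoint at $p=\infty$ the goal is $\|\brack{x}{y}\|_\infty\le\|x\|_\infty\|y\|_1$ for $x\in M$. I would first treat the case where the decomposition used to define $\brack{x}{y}$ can be taken bounded below, that is, where $D^{-1/2}yD^{-1/2}=b_1b_2^*$ with $\tau_D(b_ib_i^*)<\infty$: writing $\beta_i=D^{1/2}b_i\in L^2(M)$ (so that $\beta_1\beta_2^*=y$), the defining formula of $\brack{x}{y}$, cyclicity of $\tau$, and $G$-invariance of $\tau$ give, for a.e.\ $g\in G$,
\[ \brack{x}{y}(g)=\tau\big((g\beta_1)^*\,x\,(g\beta_2)\big)=\tau\big(x\,g(\beta_2\beta_1^*)\big)=\tau\big(x\,g(y^*)\big), \]
whence $|\brack{x}{y}(g)|\le\|x\|_\infty\|g(y^*)\|_1=\|x\|_\infty\|y\|_1$ since $g(y^*)\in L^1(M)$. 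For a general admissible $y\in L^1(M)$ I would reduce to this case by spectral truncation of $D$: with $e_n=\mathbbm 1_{[1/n,n]}(D)\in M$ and $y_n=e_nye_n$, on $\operatorname{ran}(e_n)$ the operators $D^{\pm1/2}$ are bounded, so each $y_n$ is admissible and of the above bounded-below form, while $y_n\to y$ and $D^{-1/2}y_nD^{-1/2}\to D^{-1/2}yD^{-1/2}$ in $L^1(M)$. By \Cref{lem:cauchy} this forces $\brack{x}{y_n}\to\brack{x}{y}$ in $L^1(G)$, so passing to an a.e.\ convergent subsequence and using $\|y_n\|_1\to\|y\|_1$ yields $|\brack{x}{y}(g)|\le\|x\|_\infty\|y\|_1$ for a.e.\ $g$.

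With both endpoints in place, and using that $(L^p(M))_{1\le p\le\infty}$ and $(L^p(G))_{1\le p\le\infty}$ each form a complex interpolation scale, I would apply complex interpolation (in the form of the Riesz--Thorin theorem) to $T_y$ on $M\cap L^1(M)$. Writing $\tfrac1p=(1-\theta)\tfrac11+\theta\tfrac1\infty$ with $\theta=1/q$, this gives $\|\brack{x}{y}\|_p\le\|x\|_p\,(\|D^{-1/2}yD^{-1/2}\|_1)^{1-\theta}(\|y\|_1)^{\theta}=\|x\|_p\,\|y\|_1^{1/q}\,\|D^{-1/2}yD^{-1/2}\|_1^{1/p}$ for $x\in M\cap L^1(M)$, and density completes the argument. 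I expect the main obstacle to be the $p=\infty$ endpoint, and more precisely the reduction from the bounded-below case to general admissible $y$: one must check that the spectral truncations $e_nye_n$ retain the special form needed for the a.e.\ identity $\brack{x}{y_n}(g)=\tau(x\,g(y_n^*))$ while their $L^1$-norms converge to $\|y\|_1$. Once the two endpoint bounds are established, the interpolation step itself is routine.
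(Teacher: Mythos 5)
Your proposal is correct and follows essentially the same route as the paper: the two endpoint bounds $\|\brack{x}{y}\|_1\le\|x\|_1\|D^{-1/2}yD^{-1/2}\|_1$ (from \Cref{cor:duflo-admissible}) and $\|\brack{x}{y}\|_\infty\le\|x\|_\infty\|y\|_1$ (via $\brack{x}{y}(g)=\tau(x\,g(y^*))$ and H\"older), combined with the noncommutative Riesz--Thorin theorem. The only difference is that your spectral-truncation reduction at the $p=\infty$ endpoint is superfluous: admissibility of $y$ in the sense of \Cref{def:a-admissible} already provides a factorization $D^{-1/2}yD^{-1/2}=b_1b_2^*$ with $D^{1/2}b_i\in L^2(M)$, so the ``bounded-below'' case you treat first is in fact the general case.
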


\begin{proof}
    The case $p = 1$ we proved in the previous section. For $p = \infty$, $x \in L^\infty(\M) = \M$, so
    \begin{align*}
        |\brack{x}{y}(s)| 
        &= |\tau(x \alpha_s(y^*))| 
        \leq \|x\|_{L^\infty(\M)} \|\alpha_s(y^*)\|_{L^1(\M)}
        = \|x\|_\infty \|y\|_1, \quad s \in G.
    \end{align*}
    Hence the operator $x \mapsto \brack{x}{y}$ is bounded both on $L^1(\M)$ and on $L^\infty(\M)$ with norm-bounds $\| D^{-1/2} y D^{-1/2} \|_1$ and $\| y \|_1$, respectively. By the Riesz--Thorin theorem for noncommutative $L^p$-spaces (see e.g.\ \cite{DoDoPa90}) it follows that it is bounded on $L^p(\M)$ for any $1 \leq p \leq \infty$ with bound given by $\| D^{-1/2} y D^{-1/2} \|_1^{1/p} \| y \|_1^{1-1/p}$.
\end{proof}

\section{Examples}\label{sec:examples}

In this section we illustrate the main theorem through examples.

\subsection{Square-integrable unitary representations}\label{subsec:square-integrable}

Let $\pi\colon G \to \mathcal{B}(\mathcal{H})$ be a projective, irreducible, unitary representation of $G$ on a Hilbert space $\mathcal{H}$ that is \emph{square-integrable} (cf.\ \cite[Chapter 14]{Di83}), that is, there exist nonzero $\xi, \eta \in \mathcal{H}$ such that \[
    \int_G |\langle \xi, \pi_s \eta \rangle|^2 \dif{s} < \infty.
\]
Consider the von Neumann algebra $\M = \mathcal{B}(\mathcal{H})$ with the canonical tracial weight $\tau = \operatorname{Tr}$. Then \[ 
    \alpha_s(x) = \pi_s x \pi_s^*, \quad s \in G, \; x \in \M, 
\]
defines an action of $G$ on $\mathcal{B}(\mathcal{H})$ since $\pi$ is strongly continuous. It is furthermore trace-preserving (since $\pi$ is unitary) and ergodic (since $\pi$ is irreducible). The spaces $L^p(M)$ are the Schatten classes of bounded linear operators on $\mathcal{H}$ (in particular in $\M$). For positive operators $x$ and $y$ on $\mathcal{H}$ we can write \[ 
    \brack{x}{y}(s) 
    = \operatorname{Tr}(x^{1/2} \alpha_s(y) x^{1/2}) 
    = \operatorname{Tr}(x \alpha_s(y)), \quad s \in G.
\]
    
For $\eta, \zeta \in \mathcal{H}$ we denote by $\eta \otimes \zeta \in \mathcal{B}(\mathcal{H})$ the rank-one operator given by \[ 
    (\eta \otimes \zeta) \xi = \langle \xi, \eta \rangle \zeta, 
    \quad \xi \in \mathcal{H}. 
\]
It is readily shown for vectors $\xi,\xi',\eta,\eta' \in \mathcal{H}$ and all $s \in G$ that
\[\begin{aligned}
    \mathrm{Tr}( \xi \otimes \xi') 
    &= \langle \xi', \xi \rangle, \\
    \mathrm{Tr}( (\xi \otimes \xi')^* \alpha_s(\eta \otimes \eta')) 
    &= \langle \xi, \pi_s \eta \rangle \overline{ \langle \xi', \pi_s \eta' \rangle } .
\end{aligned}\]
By assumption there exist nonzero $\xi,\eta \in \mathcal{H}$ such that with $x = \xi \otimes \xi$ and $y = \eta \otimes \eta$ the condition $\int_G \mathrm{Tr}(x \alpha_s (y)) \dif{s} < \infty$ is satisfied. It follows that the action of $G$ on $\mathcal{B}(\mathcal{H})$ is $\operatorname{Tr}$-integrable. By \cref{thm:duflo-moore} there exists a densely defined, positive, invertible operator $D$ affiliated with $\mathcal{B}(\mathcal{H})$ (so we may represent it as an operator on $\mathcal{H}$) such that if $A$ is a trace class operator on $\mathcal{H}$ and $B$ is a bounded $D^{-1}$-admissible operator on $\mathcal{H}$ so that $D^{-1/2}BD^{-1/2}$ defines a trace class operator on $\mathcal{H}$, then the continuous function $s \mapsto \mathrm{Tr}(A \alpha_s(B))$ on $G$ is integrable, with \begin{equation}
    \int_G \mathrm{Tr}(A \alpha_s(B)) \dif{s} = \mathrm{Tr}(A) \mathrm{Tr}(D^{-1/2}BD^{-1/2}). \label{eq:duflo-moore-bounded-operators}
\end{equation}
This recovers \cite[Theorem 4.2, Corollary 4.4]{Ha23} which again generalizes results in \cite{BeBeLu22,LuSk18,We84}. 

Using $A = \xi \otimes \xi'$ and $B = \eta \otimes \eta'$ with $\xi,\xi' \in \mathcal{H}$ and $\eta,\eta' \in \dom(D^{-1/2})$ gives \[ 
    \int_G \langle \xi, \pi_s \eta \rangle \overline{ \langle \xi', \pi_s \eta' \rangle } \dif{s} 
    = \langle \xi, \xi' \rangle \overline{ \langle D^{-1/2} \eta, D^{-1/2} \eta' \rangle }.
\] This recovers the theorem of Duflo and Moore \cite{DuMo1976}. In particular, when $G$ is unimodular, $D = d I$ where $d$ is the formal dimension of $\pi$, which in the compact case coincides with the actual dimension of $\pi$ given that the Haar measure on $G$ is normalized to be a probability measure.

\begin{remark}\label{rmk:parity}
    For $G = \R^{2d}$ and $\mathcal{H} = L^2(\R^d)$ one can consider the projective, square-integrable, irreducible, unitary representation $\pi$ of $G$ on $\mathcal{H}$ given by \[ 
        (\pi_{(t,\omega)} f)(s) = e^{2\pi i \omega \cdot s} f(s-t), 
        \quad f \in L^2(\R^d). 
    \]
    This is the setting of Werner's work \cite{We84}. In this context the parity operator $P$ on $L^2(\R^d)$ is defined by $(Pf)(t) = f(-t)$ and one sets $\check{x} = P x P$ for a bounded linear operator $x$ on $L^2(\R^d)$. The convolution of operators $x$ and $y$ on $L^2(\R^d)$ is then defined by \[ 
        (x * y)(t,\omega) = \mathrm{Tr}(x \alpha_{(t,\omega)}(\check{y})), 
        \quad (t,\omega) \in \R^{2d}, 
    \]
    where as before $\alpha_s(y) = \pi_s y \pi_s^*$.  The relation between the convolution and bracket product of operators is given by \[
        \brack{x}{y^*} = x * \check{y}. 
    \]
    This gives a direct analogue of the convolution of two scalar-valued functions on $\R^{2d}$. Note that in e.g.\ \cite{Ha23} the convention for the convolution of operators is different from that of \cite{We84}, since it does not involve the parity operator.
\end{remark}

\subsection{Actions on measure spaces}\label{subsec:action-measure-space}

Let $(T,\mu)$ be a $\sigma$-finite measure space equipped with a measurable group action of $G$, that is, a measurable map $G \times T \to T$, $(s, t) \mapsto s t$, such that
\begin{align*}
    s (s' t) &= (s s') t, \\
    e t &= t,
\end{align*}
for $s,s' \in G$ and $t \in T$. We assume that the action is measure-preserving, that is, $\mu(s B) = \mu(B)$ for all $s \in G$ and measurable $B \subseteq T$, and ergodic, that is, whenever $s B = B$ for every $s \in G$, then $\mu(B) = 0$ or $\mu(T \setminus B) = 0$.

Consider the commutative von Neumann algebra $\M = L^\infty(T,\mu)$ with normal, semifinite, faithful tracial weight $\tau$ given by $\tau(f) = \int_T f \dif{\mu}$. The $L^p$-spaces associated with $(\M,\tau)$ are the classical $L^p$-spaces $L^p(T,\mu)$. Moreover, positive self-adjoint operators affiliated with $\M$ can be identified with a non-negative extended real-valued functions on $T$ that are finite $\mu$-almost everywhere. The action of $G$ on $T$ induces an ergodic, $\tau$-preserving action of $G$ on $\M$ given by \[ 
    \alpha_s(x)(t) = x(s^{-1} t), 
    \quad s \in G, \; x \in M, \; t \in T. 
\]
This action is $\tau$-integrable if and only if there exist measurable non-null sets $A,B \subseteq T$ such that \[ 
    \int_G \mu(A \cap s B) \dif{s} < \infty. 
\]
In this case, \Cref{thm:duflo-moore} gives the existence of a non-negative, extended real-valued, almost everywhere finite, almost everywhere nonzero function $D$ on $T$ such that \[ 
    \int_G \int_T x(t) y(s^{-1}t) \dif{\mu(t)} \dif{s} 
    = \Big( \int_T x(t) \dif{\mu(t)} \Big) \Big( \int_T y(t) D(t)^{-1} \dif{\mu(t)} \Big) 
\]
for nonnegative measurable functions $x$ and $y$ on $T$. In fact, $D^{-1}$ is the Radon--Nikodym derivative $\dif{\nu_A}/ \dif{\mu}$ where $\nu_A$ is the measure on $T$ given by $\nu_A(B) = \mu(A)^{-1} \int_G \mu(A \cap sB) \dif{s}$ for any measurable set $A \subseteq T$ with positive, finite measure.

\subsection{Convolution on a locally compact group}\label{subsec:classical-convolution}

Consider the particular case of \Cref{subsec:action-measure-space} where $G$ acts on itself by left translation. This is a measure-preserving, ergodic action with respect to the left Haar measure on $G$, and so induces a $\tau$-preserving, ergodic action of $G$ on $\M = L^\infty(G)$.

For $x,y \in L^\infty(G)_+$ we have \[ 
    \brack{x}{y}(s) = \int_G x(t) y(s^{-1}t) \dif{t}, \quad s \in G. 
\]
Hence \[ 
    \int_G \brack{x}{y} \dif{m} 
    = \int_G \int_G x(t) y(s^{-1}t) \dif{t} \dif{s} 
    = \Big( \int_G x(t) \dif{t} \Big) \Big( \int_G \Delta(s)^{-1} y(s) \dif{s} \Big), 
\]
so $y$ is admissible if and only if $\Delta^{-1} y \in L^1(G)$, that is, is integrable with respect to the right Haar measure. Picking $x$ and $y$ to be nonzero, nonnegative, compactly supported functions, the above quantity is finite, which shows that the action is $\tau$-integrable. Hence \Cref{thm:duflo-moore} applies, and the above equation tells us that the Duflo--Moore operator is given by pointwise multiplication with $\Delta$. That is,
\[ (Dx)(s) = \Delta(s) x(s) , \quad x \in L^\infty(G)_+, \quad s \in G. \]
Note furthermore that the Banach space $L^1(G,D^{-1})$ becomes the $L^1$-space of $G$ with respect to the right Haar measure.

Now for $x \in L^p(G)$ and $y \in L^q(G)$ with $1/p + 1/q = 1 + 1/r$ where $p,q,r \geq 1$, \Cref{thm:young} gives us that
\begin{equation}
   \| \brack{x}{ \Delta^{1/r} y} \|_r \leq \| x \|_p \| y \|_q \label{eq:young-commutative}
\end{equation}
where $\brack{x}{y}(s) = \int_G x(t) \overline{y(s^{-1}t)} \dif{t}$ for $s \in G$. We will reformulate this in terms of ordinary convolution of functions on $G$. For this, denote by $y^*$ the function given by $y^*(s) = \overline{y(s^{-1})}$ for $s \in G$, and note that the usual convolution of functions on $G$ is given by $(x * y)(s) = \brack{x}{y^*}$. Furthermore, $y \in L^q(G)$ if and only if $\tilde{y} = \Delta^{-1/q} y^* \in L^q(G)$ with $\| \tilde{y} \|_q = \| y \|_q$, and $\Delta^{-1/r} y^* = \Delta^{1/p'} \tilde{y}$ where $1/p + 1/p' = 1$. Hence we arrive at \[ 
    \| x * \Delta^{1/p'}\tilde{y} \|_r = \| x * \Delta^{-1/r} y^* \|_r = \| \brack{x}{\Delta^{1/r}y} \|_r \leq \| x \|_p \| \tilde{y} \|_q , \quad x \in L^p(G), \tilde{y} \in L^q(G),
\]
which is the way Young's convolution inequality is usually formulated, see e.g.\ \cite[p.\ 519]{Ro91} or \cite[Remark 2.2, p.\ 183]{KlRu78}.

\subsection{Twisted group C*-algebras of abelian groups}\label{subsec:fourier-inversion}

Let $G$ denote a locally compact group with a measurable 2-cocycle $\sigma$, that is, a measurable map $\sigma \colon G \times G \to \mathbb{T}$ such that
\begin{align*}
    \sigma(s, t) \sigma(s t, r) &= \sigma(s, t r) \sigma(t, r), \\
    \sigma(e, e) &= 1.
\end{align*}
Consider the $\sigma$-twisted group von Neumann algebra $\vN(G,\sigma)$ (cf.\ \cite{Su80}), that is, the von Neumann algebra on $L^2(G)$ generated by the operators $\lambda_\sigma(s)$ ($s \in G$) given by \[ 
    \lambda_\sigma(s) \xi(t) = \sigma(s, s^{-1}t) \xi(s^{-1}t), 
    \quad \xi \in L^2(G), \; s,t \in G. 
\]
For a function $f \in L^1(G)$ we denote by $\lambda_\sigma(f)$ the integrated form operator given by \[ 
    (\lambda_\sigma(f) \xi)(s) 
    = (f *_\sigma \xi)(s) 
    = \int_G f(t) \xi(t^{-1}s) \sigma(t,t^{-1}s) \dif{t}, 
    \quad \xi \in L^2(G). 
\]
Then $\lambda_\sigma(f) \in \vN(G,\sigma)$ and $\lambda_\sigma(f) \lambda_\sigma(f') = \lambda_\sigma(f *_\sigma f')$ for $f,f' \in L^1(G)$. Moreover $\lambda_\sigma(f)^* = \lambda_\sigma(f^*)$ where $f^*(s) = \overline{\sigma(s,s^{-1}) f(s^{-1})}$. In general $\vN(G,\sigma)$ admits a normal, semifinite, faithful weight $\phi$, called the Plancherel weight, which is determined by \[
    \phi(\lambda (f^* *_\sigma f)) 
    = \int_G | f(s)|^2 \dif{s}, 
    \quad f \in C_c(G).
\]
The Plancherel weight is tracial if and only if $G$ is unimodular, in which case it is called the Plancherel trace.

We will assume that $G$ is abelian (so that in particular it is unimodular). Denote by $\tau$ the Plancherel trace. Define an action of the Pontryagin dual $\widehat{G}$ on $\vN(G,\sigma)$ by \[ 
    \omega \lambda(f) = \lambda(\omega f), 
    \quad \omega \in \widehat{G}, \; f \in C_c(G), 
\]
where $\omega f$ denotes the pointwise product. This action is $\tau$-preserving: For $f_1,f_2 \in C_c(G)$ and $\omega \in \widehat G$ we have \[
    \omega (f_1^* *_\sigma f_2) = (\omega f_1)^* *_\sigma (\omega f_2),
\] so we may check for $f \in C_c(G)$: \[
    \tau(\omega (f^* *_\sigma f))
    = \tau((\omega f)^* *_\sigma (\omega f))
    = \int_G |\omega(s) f(s)|^2 \dif s
    = \int_G |f(s)|^2 \dif s
    = \tau(f^* *_\sigma f).
\]

Recall the Fourier transform $\mathcal F(f)(\omega) = \int_G f(s) \omega^{-1}(s) \dif s$.
Note that \[ 
    \brack{ \lambda_\sigma(f_1) }{ \lambda_\sigma(f_2) }(\omega) 
    = \tau( \lambda(f_1 *_\sigma (\omega f_2^*)) )
    = \int_G \omega(s^{-1}) f_1(s) \overline{f_2(s)} \dif s
    = \mathcal{F}(f_1 \overline{f_2})(\omega).
\]
From this it becomes apparent that the action of $\widehat G$ is $\tau$-integrable. Then all the hypotheses of \Cref{thm:duflo-moore} are satisfied, so there exists a constant $d > 0$ such that \[
    \int_{\widehat G} \brack{x}{y}(\omega) \dif \omega = d \tau(x) \tau(y^*), \quad x,y \in W^*(G,\sigma).
\]
Fix $f_1,f_2 \in C_c(G) * C_c(G)$. For $s \in G$ consider $x_s = \lambda_\sigma(s^{-1}) \lambda_\sigma(f_1)$ and $y_s = \lambda_\sigma(s^{-1}) \lambda_\sigma(f_2)$. Then $\tau(x_s) = f_1(s)$ and $\tau(y_s^*) = \overline{f_2(s)}$, while \[
    \brack{x_s}{y_s}(\omega)
    = \mathcal{F}(f_1(s \,\cdot\,) \overline{f_2(s \,\cdot\,)})(\omega)
    = \mathcal{F}(f_1 \overline{f_2})(\omega) \, \omega(s).
\] So, denoting $F = f_1 \overline{f_2}$, the Duflo--Moore theorem states that \[
    \int_{\widehat G} \mathcal{F}(F)(\omega) \, \omega(s) \dif \omega = d F(s), \quad s \in G.
\]

This is the Fourier inversion formula. Usually the Haar measure on $\widehat{G}$ is normalized according to the Haar measure on $G$ such that $d = 1$.

\subsection{Induced actions}

In this example we relate the Duflo--Moore operator of an integrable action on a von Neumann algebra of a closed subgroup to the Duflo--Moore operator of the induced action of the whole group. This gives a procedure to construct new examples to which the main theorems of the paper apply. See \cite{Ta73} for a reference of induced group actions on von Neumann algebras.

Fix as before a locally compact group $G$ with modular function $\Delta$. Let $H$ be a closed subgroup of $G$ with the property that the restriction $\Delta|_H$ is the modular function of $H$. This is equivalent to the existence of a $G$-invariant Radon measure $\mu$ on the quotient $G/H$, cf.\ \cite[Theorem 2.49]{Fo95}, which we normalize such that Weil's integration formula\begin{equation}\label{eq:quotient_integral_formula}
    \int_G f(s) \dif s = \int_{G/H} \int_H f(s t) \dif t \dif {\mu(sH)}
\end{equation}
holds for all nonnegative Borel functions $f$ on $G$.

Let $\mathcal N$ be a von Neumann algebra with normal, semifinite, faithful trace $\kappa$ and suppose that $\mathcal N$ is equipped with an action $\beta$ of $H$. Suppose that $\beta$ is ergodic, $\kappa$-preserving and $\kappa$-integrable. 
Consider the von Neumann algebra \[ 
    \M = \operatorname{Ind}_H^G(\mathcal N) 
    = \{x \in L^\infty(G, \mathcal N) : x(s t^{-1}) = \beta_t(x(s)) \text{ for all } s \in G \text{ and } t \in H \}.
\]
An action $\alpha$ of $G$ on $\M$ is given by \[ 
    \alpha_s(x)(s') = x(s^{-1} s'), \quad s,s' \in G. 
\]
This is the \emph{induced action} of $\beta$ on $\mathcal N$, which is easily checked to be ergodic.

As $\beta$ is $\kappa$-preserving, for all $x \in \M_+$ the function $s \mapsto \kappa(x(s))$ is right $H$-invariant, hence can be considered a function on $G/H$. We define \[
    \tau(x) = \int_{G/H} \kappa(x(s)) \dif \mu(sH), \quad x \in \M_+.
\] One checks that $\tau$ defines a normal, semifinite, faithful trace on $\M$, and moreover that $\alpha$ is $\tau$-preserving.

We denote by $C$ the Duflo--Moore operator corresponding to the action $\beta$ of $H$ on $\mathcal N$. As $C$ is semi-invariant with respect to $\Delta^{-1}$, for every $y \in \M_+$ the function $s \mapsto \Delta^{-1}(s) \kappa_{C^{-1}}(y(s))$ is right $H$-invariant, hence can be considered a function on $G/H$. Assuming that $\alpha$ is $\tau$-integrable, we claim that \begin{equation}\label{eq:duflo-moore-induced-action}
    \tau_{D^{-1}}(y) = \int_{G/H} \Delta^{-1}(s) \, \kappa_{C^{-1}}(y(s)) \dif \mu(sH), \quad y \in \M_+,
\end{equation} determines the Duflo--Moore operator $D$ for the action $\alpha$ of $G$ on $\M$. We remark that this can be seen as an extension of \cref{subsec:classical-convolution} by choosing the trivial subgroup.

Given $w \in \mathcal N_+$ with $\kappa(w) < \infty$ and $y \in \M_+$ we consider the nonnegative function $f$ on $G$ defined by $f(s) = \kappa(w y(s))$ for $s \in G$. Then \[
    f(s t^{-1}) 
    = \kappa(w \beta_t(y(s))) 
    = \,_\beta\langle w, y(s) \rangle(t),
    \quad s \in G, t \in H.
\] By the Duflo--Moore theorem for $\beta$ we find \[
    \int_H f(s t^{-1}) \dif t
    = \int_H \,_\beta\langle w, y(s) \rangle(t) \dif t
    = \kappa(w) \kappa_{C^{-1}}(y(s)).
\] We note that in general \eqref{eq:quotient_integral_formula} may be applied to functions $\Delta^{-1} f$ to yield \[
    \int_G f(s^{-1}) \dif s
    = \int_{G/H} \Delta^{-1}(s) \int_H f(s t^{-1}) \dif t \dif \mu(s H).
\] This shows \[\begin{aligned}
    \int_G \kappa(w y(s^{-1})) \dif s
    &= \int_{G/H} \Delta^{-1}(s) \, \kappa(w) \kappa_{C^{-1}}(y(s)) \dif \mu(s H).
\end{aligned}\]
Let now $x,y \in \M_+$ with $\tau(x) < \infty$. The above computation may be carried out for all $w = x(s') \in \mathcal N_+, s' \in G$. Then in combination with Fubini's theorem and the left-invariance of the Haar measure we conclude \[\begin{aligned}
    \int_G \tau(x \alpha_s(y)) \dif s
    &= \int_G \int_{G/H} \kappa(x(s') y(s^{-1} s')) \dif \mu(s'H) \dif s \\
    &= \int_{G/H} \int_G \kappa(x(s') y(s^{-1})) \dif s \dif \mu(s'H) \\
    &= \int_{G/H} \int_{G/H} \Delta^{-1}(s) \, \kappa(x(s')) \kappa_{C^{-1}}(y(s)) \dif \mu(s H) \dif \mu(s'H) \\
    &= \tau(x) \int_{G/H} \Delta^{-1}(s) \, \kappa_{C^{-1}}(y(s)) \dif \mu(s H).
\end{aligned}\]
As $x,y \in \M_+$ were arbitrary, this shows \eqref{eq:duflo-moore-induced-action}.

\printbibliography

@book{Fo95,
  title = {A course in abstract harmonic analysis},
  author = {Gerald B. Folland},
  year = {1995},
  publisher = {CRC Press},
  isbn = {0849384907},
}

@article{Ne74,
    title = {Notes on non-commutative integration},
    journal = {Journal of Functional Analysis},
    volume = {15},
    number = {2},
    pages = {103-116},
    year = {1974},
    issn = {0022-1236},
    doi = {https://doi.org/10.1016/0022-1236(74)90014-7},
    author = {Edward Nelson}
}

@book {Sc12,
    AUTHOR = {Schm\"udgen, Konrad},
     TITLE = {Unbounded self-adjoint operators on {H}ilbert space},
    SERIES = {Graduate Texts in Mathematics},
    VOLUME = {265},
 PUBLISHER = {Springer, Dordrecht},
      YEAR = {2012},
     PAGES = {xx+432},
      ISBN = {978-94-007-4752-4},
   MRCLASS = {47-01 (47B25 47E05)},
  MRNUMBER = {2953553},
MRREVIEWER = {G.\ V.\ Rozenblum},
       DOI = {10.1007/978-94-007-4753-1},
       URL = {https://doi.org/10.1007/978-94-007-4753-1},
}

@book{Di83,
 author = {Dixmier, Jacques},
 title = {{{\(C^*\)}}-algebras. {Transl}. from the {French} by {Francis} {Jellett}},
 fseries = {North-Holland Mathematical Library},
 series = {North-Holland Math. Libr.},
 volume = {15},
 isbn = {0-444-86391-5},
 year = {1983},
 publisher = {Elsevier (North-Holland), Amsterdam},
 language = {English},
 keywords = {46L05,46-02,22D25,46L10,46L35,46L40,46L45},
 zbMATH = {4073740},
 Zbl = {0657.46040}
}

@article{KlRu78,
 author = {Klein, Abel and Russo, Bernard},
 title = {Sharp inequalities for {Weyl} operators and {Heisenberg} groups},
 fjournal = {Mathematische Annalen},
 journal = {Math. Ann.},
 issn = {0025-5831},
 volume = {235},
 pages = {175--194},
 year = {1978},
 language = {English},
 doi = {10.1007/BF01405012},
 keywords = {43A70,43A80,22E25,22E70},
 url = {https://eudml.org/doc/163149},
 zbMATH = {3560133},
 Zbl = {0359.43005}
}

@book{Ro91,
 author = {Robinson, Derek W.},
 title = {Elliptic operators and {Lie} groups},
 isbn = {0-19-853591-0},
 year = {1991},
 publisher = {Oxford etc.: Clarendon Press},
 language = {English},
 keywords = {47F05,35J30,35K25,47-02,22E30},
 zbMATH = {51144},
 Zbl = {0747.47030}
}

@article{Ta73,
 author = {Takesaki, Masamichi},
 title = {Duality for crossed products and the structure of von {Neumann} algebras of type {III}},
 fjournal = {Acta Mathematica},
 journal = {Acta Math.},
 issn = {0001-5962},
 volume = {131},
 pages = {249--310},
 year = {1973},
 language = {English},
 doi = {10.1007/BF02392041},
 keywords = {46L10},
 zbMATH = {3423063},
 Zbl = {0268.46058}
}

@article{Su80,
 author = {Sutherland, Colin E.},
 title = {Cohomology and extensions of {Neumann} algebras. {I}},
 fjournal = {Publications of the Research Institute for Mathematical Sciences, Kyoto University},
 journal = {Publ. Res. Inst. Math. Sci.},
 issn = {0034-5318},
 volume = {16},
 pages = {105--133},
 year = {1980},
 language = {English},
 doi = {10.2977/prims/1195187501},
 keywords = {46M20,46L55,46L40},
 zbMATH = {3723313},
 Zbl = {0462.46048}
}

@article{Ko92,
    author = {Hideki Kosaki},
    title = {An Inequality of Araki-Lieb-Thirring (Von Neumann Algebra Case)},
    journal = {Proceedings of the American Mathematical Society},
    number = {2},
    pages = {477--481},
    publisher = {American Mathematical Society},
    urldate = {2025-04-28},
    volume = {114},
    year = {1992},
    doi = {10.2307/2159671},
}

@article{PeTa73,
    author = {Pedersen, G.K. and Takesaki, M.},
    title = {The Radon-Nikodym theorem for von neumann algebras},
    journal = {Acta Mathematica},
    volume = {130},
    pages = {53–87},
    year = {1973}
}

@article{DuMo1976,
    title = {On the regular representation of a nonunimodular locally compact group},
    author = {M Duflo and Calvin C Moore},
    journal = {Journal of Functional Analysis},
    volume = {21},
    number = {2},
    pages = {209-243},
    year = {1976},
    issn = {0022-1236},
    doi = {10.1016/0022-1236(76)90079-3},
}

@article {An06,
    AUTHOR = {Aniello, Paolo},
     TITLE = {Square integrable projective representations and square
              integrable representations modulo a relatively central
              subgroup},
   JOURNAL = {Int. J. Geom. Methods Mod. Phys.},
  FJOURNAL = {International Journal of Geometric Methods in Modern Physics},
    VOLUME = {3},
      YEAR = {2006},
    NUMBER = {2},
     PAGES = {233--267},
      ISSN = {0219-8878,1793-6977},
   MRCLASS = {22D10 (81R30)},
  MRNUMBER = {2213819},
MRREVIEWER = {Jeffrey\ J.\ Mitchell},
       DOI = {10.1142/S0219887806001132},
       URL = {https://doi.org/10.1142/S0219887806001132},
}

@article {LuSk21,
    AUTHOR = {Luef, Franz and Skrettingland, Eirik},
     TITLE = {A {W}iener {T}auberian theorem for operators and functions},
   JOURNAL = {J. Funct. Anal.},
  FJOURNAL = {Journal of Functional Analysis},
    VOLUME = {280},
      YEAR = {2021},
    NUMBER = {6},
     PAGES = {Paper No. 108883, 44},
      ISSN = {0022-1236,1096-0783},
   MRCLASS = {47B90 (40E05 42B10 47B10 47B35 47G30 81S10)},
  MRNUMBER = {4192830},
MRREVIEWER = {Lu\'is\ P.\ Castro},
       DOI = {10.1016/j.jfa.2020.108883},
       URL = {https://doi.org/10.1016/j.jfa.2020.108883},
}

@article {LuSk20,
    AUTHOR = {Luef, Franz and Skrettingland, Eirik},
     TITLE = {On accumulated {C}ohen's class distributions and mixed-state
              localization operators},
   JOURNAL = {Constr. Approx.},
  FJOURNAL = {Constructive Approximation. An International Journal for
              Approximations and Expansions},
    VOLUME = {52},
      YEAR = {2020},
    NUMBER = {1},
     PAGES = {31--64},
      ISSN = {0176-4276,1432-0940},
   MRCLASS = {81S30 (42C25 47N50 94A12)},
  MRNUMBER = {4119470},
MRREVIEWER = {Ondrej\ Hutn\'ik},
       DOI = {10.1007/s00365-019-09465-2},
       URL = {https://doi.org/10.1007/s00365-019-09465-2},
}

@article {LuSk19,
    AUTHOR = {Luef, Franz and Skrettingland, Eirik},
     TITLE = {Mixed-state localization operators: {C}ohen's class and trace
              class operators},
   JOURNAL = {J. Fourier Anal. Appl.},
  FJOURNAL = {The Journal of Fourier Analysis and Applications},
    VOLUME = {25},
      YEAR = {2019},
    NUMBER = {4},
     PAGES = {2064--2108},
      ISSN = {1069-5869,1531-5851},
   MRCLASS = {47G30 (35S05 46E35 47B10)},
  MRNUMBER = {3977148},
MRREVIEWER = {Ondrej\ Hutn\'ik},
       DOI = {10.1007/s00041-019-09663-3},
       URL = {https://doi.org/10.1007/s00041-019-09663-3},
}

@article {We84,
    AUTHOR = {Werner, R.},
     TITLE = {Quantum harmonic analysis on phase space},
   JOURNAL = {J. Math. Phys.},
  FJOURNAL = {Journal of Mathematical Physics},
    VOLUME = {25},
      YEAR = {1984},
    NUMBER = {5},
     PAGES = {1404--1411},
      ISSN = {0022-2488,1089-7658},
   MRCLASS = {81C25 (46L60 81D99)},
  MRNUMBER = {745753},
MRREVIEWER = {Anthony\ J.\ Bracken},
       DOI = {10.1063/1.526310},
       URL = {https://doi.org/10.1063/1.526310},
}

@article {LuSk18,
    AUTHOR = {Luef, Franz and Skrettingland, Eirik},
     TITLE = {Convolutions for localization operators},
   JOURNAL = {J. Math. Pures Appl. (9)},
  FJOURNAL = {Journal de Math\'ematiques Pures et Appliqu\'ees. Neuvi\`eme
              S\'erie},
    VOLUME = {118},
      YEAR = {2018},
     PAGES = {288--316},
      ISSN = {0021-7824,1776-3371},
   MRCLASS = {47G30 (35S05 35S30 46E35 47B10)},
  MRNUMBER = {3852476},
MRREVIEWER = {Antonio\ Galbis},
       DOI = {10.1016/j.matpur.2017.12.004},
       URL = {https://doi.org/10.1016/j.matpur.2017.12.004},
}

@article {BeBeLu22,
    AUTHOR = {Berge, Eirik and Berge, Stine Marie and Luef, Franz and
              Skrettingland, Eirik},
     TITLE = {Affine quantum harmonic analysis},
   JOURNAL = {J. Funct. Anal.},
  FJOURNAL = {Journal of Functional Analysis},
    VOLUME = {282},
      YEAR = {2022},
    NUMBER = {4},
     PAGES = {Paper No. 109327, 64},
      ISSN = {0022-1236,1096-0783},
   MRCLASS = {43A25 (42C40 43A75 47G30 81S08)},
  MRNUMBER = {4348794},
MRREVIEWER = {Alessandro\ Oliaro},
       DOI = {10.1016/j.jfa.2021.109327},
       URL = {https://doi.org/10.1016/j.jfa.2021.109327},
}

@article {Ha23,
    AUTHOR = {Halvdansson, Simon},
     TITLE = {Quantum harmonic analysis on locally compact groups},
   JOURNAL = {J. Funct. Anal.},
  FJOURNAL = {Journal of Functional Analysis},
    VOLUME = {285},
      YEAR = {2023},
    NUMBER = {8},
     PAGES = {Paper No. 110096, 49},
      ISSN = {0022-1236,1096-0783},
   MRCLASS = {43A30 (43A65 47B90 47G30)},
  MRNUMBER = {4619482},
       DOI = {10.1016/j.jfa.2023.110096},
       URL = {https://doi.org/10.1016/j.jfa.2023.110096},
}

@book {Ta02,
    AUTHOR = {Takesaki, M.},
     TITLE = {Theory of operator algebras. {I}},
    SERIES = {Encyclopaedia of Mathematical Sciences},
    VOLUME = {124},
      NOTE = {Reprint of the first (1979) edition,
              Operator Algebras and Non-commutative Geometry, 5},
 PUBLISHER = {Springer-Verlag, Berlin},
      YEAR = {2002},
     PAGES = {xx+415},
      ISBN = {3-540-42248-X},
   MRCLASS = {46Lxx (46-01)},
  MRNUMBER = {1873025},
}

@book {Ta03,
    AUTHOR = {Takesaki, M.},
     TITLE = {Theory of operator algebras. {II}},
    SERIES = {Encyclopaedia of Mathematical Sciences},
    VOLUME = {124},
 PUBLISHER = {Springer-Verlag, Berlin},
      YEAR = {2003},
     PAGES = {xxii+518},
      ISBN = {354042914X},
       DOI = {10.1007/978-3-662-10451-4}
}

@book {Fu05,
    AUTHOR = {F\"{u}hr, Hartmut},
     TITLE = {Abstract harmonic analysis of continuous wavelet transforms},
    SERIES = {Lecture Notes in Mathematics},
    VOLUME = {1863},
 PUBLISHER = {Springer-Verlag, Berlin},
      YEAR = {2005},
     PAGES = {x+193},
      ISBN = {3-540-24259-7},
   MRCLASS = {43A30 (42C40 43A80)},
  MRNUMBER = {2130226},
MRREVIEWER = {Margit R\"{o}sler},
       DOI = {10.1007/b104912},
       URL = {https://doi.org/10.1007/b104912},
}

@book {Gr01,
    AUTHOR = {Gr\"{o}chenig, Karlheinz},
     TITLE = {Foundations of time-frequency analysis},
    SERIES = {Applied and Numerical Harmonic Analysis},
 PUBLISHER = {Birkh\"{a}user Boston, Inc., Boston, MA},
      YEAR = {2001},
     PAGES = {xvi+359},
      ISBN = {0-8176-4022-3},
   MRCLASS = {42-02 (42Axx 46N30 94A12)},
  MRNUMBER = {1843717},
MRREVIEWER = {David F. Walnut},
       DOI = {10.1007/978-1-4612-0003-1},
       URL = {https://doi.org/10.1007/978-1-4612-0003-1},
}

@incollection {DoDoPa90,
    AUTHOR = {Dodds, Peter G. and Dodds, Theresa K.-Y. and de Pagter, Ben},
     TITLE = {Remarks on noncommutative interpolation},
 BOOKTITLE = {Miniconference on {O}perators in {A}nalysis ({S}ydney, 1989)},
    SERIES = {Proc. Centre Math. Anal. Austral. Nat. Univ.},
    VOLUME = {24},
     PAGES = {58--78},
 PUBLISHER = {Austral. Nat. Univ., Canberra},
      YEAR = {1990},
      ISBN = {0-7315-0437-2},
   MRCLASS = {46L50 (46E30 46M35 47B99)},
  MRNUMBER = {1060111},
MRREVIEWER = {Cho\ Ho\ Chu},
}

@article {FuGa25,
    AUTHOR = {Fulsche, Robert and Galke, Niklas},
     TITLE = {Quantum harmonic analysis on locally compact abelian groups},
   JOURNAL = {J. Fourier Anal. Appl.},
  FJOURNAL = {The Journal of Fourier Analysis and Applications},
    VOLUME = {31},
      YEAR = {2025},
    NUMBER = {1},
     PAGES = {Paper No. 13, 58},
      ISSN = {1069-5869,1531-5851},
   MRCLASS = {47B90 (43A25 43A65)},
  MRNUMBER = {4862415},
       DOI = {10.1007/s00041-024-10140-9},
       URL = {https://doi.org/10.1007/s00041-024-10140-9},
}

@unpublished {Ar25,
    AUTHOR  = {Arhancet, Cédric},
    TITLE   = {Quantum information theory via Fourier
multipliers on quantum groups},
    YEAR    = {2025},
    NOTE    = {Preprint, arXiv:2008.12019},
}

\end{document}